\documentclass[draftclsnofoot,peerreview,onecolumn]{IEEEtran}
\usepackage[utf8]{inputenc} 
\usepackage[T1]{fontenc}    
\usepackage{url}            
\usepackage{booktabs}       
\usepackage{amsfonts}       
\usepackage{nicefrac}       
\usepackage{microtype}      

\usepackage{amsthm}

\usepackage{amsmath}
\usepackage{amssymb}
\usepackage[linesnumbered,ruled]{algorithm2e}
\usepackage{cite}

\newtheorem{lemma}{Lemma}
\newtheorem{theorem}{Theorem}
\newtheorem{corollary}{Corollary}
\newtheorem{definition}{Definition}

\newcommand{\RR}{\mathbb{R}}

\newcommand{\argmin}[1]{\operatorname{arg}\min_{#1}}
\newcommand{\argmax}[1]{\operatorname{arg}\max_{#1}}
\newcommand{\define}{\overset{\triangle}{=}}
\newcommand{\E}[2]{\mathbb{E}_{#1}\left[#2\right]}
\newcommand{\Cov}[2]{\Sigma_{#1}\left[#2\right]}

\newcommand{\func}{f}
\newcommand{\fsub}{g}
\newcommand{\argu}{x}
\newcommand{\altarg}{y}
\newcommand{\argset}{\mathcal{K}}
\newcommand{\fcount}{n}
\newcommand{\dimcnt}{d}
\newcommand{\heslow}{\alpha}
\newcommand{\hesupp}{\beta}
\newcommand{\cond}{\kappa}
\newcommand{\softer}{s}
\newcommand{\gradupp}{L}
\newcommand{\dist}{D}
\newcommand{\prob}{p}
\newcommand{\grad}{\nabla}
\newcommand{\hess}{\nabla^2}
\newcommand{\iden}{I}
\newcommand{\optgap}{\delta}
\newcommand{\arbsmall}{\varepsilon}
\newcommand{\cost}{c}
\newcommand{\point}{b}
\newcommand{\radius}{R}

\title{Accelerating Min-Max Optimization with Application to Minimal Bounding Sphere}

\author{
	Hakan~Gokcesu, 
	Kaan~Gokcesu, 
	and~Suleyman~S.~Kozat
	\thanks{
		H. Gokcesu and Suleyman S. Kozat are with the Department of Electrical and Electronics Engineering, Bilkent University, Ankara, Turkey; e-mail: \{hgokcesu, kozat\}@ee.bilkent.edu.tr, tel: +90 (312) 290-2336.
		\newline\indent
		K. Gokcesu is with the Department of Electrical Engineering and Computer Science, Massachusetts Institute of Technology, Cambridge, MA 02139 USA, e-mail: gokcesu@mit.edu.
	}
}

\begin{document}
\maketitle

\vspace*{\fill}
\begin{abstract}
	We study the min-max optimization problem where each function contributing to the max operation is strongly-convex and smooth with bounded gradient in the search domain. 
	By smoothing the max operator, we show the ability to achieve an arbitrarily small positive optimality gap of $\optgap$ in $\tilde{O}(1/\sqrt{\optgap})$ computational complexity (up to logarithmic factors) as opposed to the state-of-the-art strong-convexity computational requirement of $O(1/\optgap)$. We apply this important result to the well-known minimal bounding sphere problem and demonstrate that we can achieve a $(1+\arbsmall)$-approximation of the minimal bounding sphere, i.e. identify an hypersphere enclosing a total of $\fcount$ given points in the $\dimcnt$ dimensional unbounded space $\RR^\dimcnt$ with a radius at most $(1+\arbsmall)$ times the actual minimal bounding sphere radius for an arbitrarily small positive $\arbsmall$, in $\tilde{O}(\fcount\dimcnt/\sqrt{\arbsmall})$ computational time 
	as opposed to the state-of-the-art approach of core-set methodology, which needs $O(\fcount\dimcnt/\arbsmall)$ computational time.
\end{abstract}
\vspace*{\fill}

\newpage
\section{Introduction}
The min-max optimization has been extensively studied in the literature due to its wide range of applications. It generally appears in the fields of statistics, operations research and engineering under the topics of throttling, resource allocation, computer graphics, computational geometry, clustering, anomaly detection and facility location.

There has been attempts to solve this problem via alternative formulations or smoothing approximations in \cite{Chen, Zang, Fang, Zhao}.
The technique of solving the min-max optimization by smoothing the target has been extensively studied in the literature. 
However, even after these extensive studies, the convergence rate analysis is very limited in literature and existing works generally 
try to show that they converge to the optimal solution given enough time (existence proofs). 
An example to the limited convergence rate analysis regarding this problem could be found in  \cite{Nesterov} which solves the generic non-smooth min-max optimization.

Consequently, to our knowledge, as the first time in the literature, we have derived a major improvement on the convergence guarantees for the min-max optimization problems where the components contributing to the max operation are strongly-convex, smooth and have bounded gradients. Our convergence rate is such that for an optimality gap of $\optgap$, we need $O(\sqrt{1/\optgap})$ computational resource improving upon the optimization complexity of $O(1/\optgap)$ for non-smooth strongly-convex functions having bounded gradients \cite{Lacoste}.

A specific type of this widely studied optimization problem is named the minimal bounding sphere. There have been several attempts to solve this problem deterministically. The computational complexity is generally super-linear with respect to the number of points and the vector space dimensions such that the dependency is polynomial with integer powers, at times, much larger than 1. Thus, the feasible approaches to this problem generally focus on heuristic methods with experimentally shown efficiency \cite{Larsson,Welzl,Fischer}.

An alternative approach to finding minimal bounding spheres, with linear time-complexity dependencies with respect to the number of point and the vector space dimension is the so-called $(1+\arbsmall)$-approximation. The corresponding attempts are based on the core-set constructions \cite{Badoiu,Kumar}. The state-of-the-art of these approximative solutions has been able to find a bounding sphere with radius $(1+\arbsmall)\radius$, where $\radius$ denotes the actual minimal bounding sphere radius, with time-complexity 
$O(\fcount\dimcnt/\arbsmall)$ \cite{Kumar} for arbitrarily large number of points $\fcount$ and vector space dimension $\dimcnt$. We improve upon this by showing that the time-complexity can be reduced to $\tilde{O}(\fcount\dimcnt/\sqrt{\arbsmall})$ (up to logarithmic factors).

We next continue with a rigorous formulation of the problem, after which we demonstrate how the improvements for both the general min-max optimization and the minimal bounding sphere are achieved.

\subsection{Problem Description for the General Min-Max Optimization}
Our convex optimization problem is such that the function $\func$ to be minimized is of the form:
\begin{equation} \label{eq:func}
	\func(\argu) \define \max_{1 \leq i \leq \fcount} \func_i(\argu),
\end{equation}
where $\fcount$ is the number of functions amongst which we select the maximum for a given argument $\argu \in \RR^\dimcnt$ via the $\max$ operator. Each function $\func_i$ is twice-differentiable and displays strong-convexity with Lipschitz-smoothness. The gradients are also assumed to be bounded at least in the subspace of $\RR^\dimcnt$ subjected to the iterative search including the optimal point. This subspace can be either preset or naturally occurring due to the nature of our method.
Twice differentiability is required since our analysis depends on behavior of the Hessian matrix.

Normally, $\func(\cdot)$ in \eqref{eq:func} is non-smooth due to the maximum operator. However, we will show that by optimizing a substitute function, which approximates the original $\func(\cdot)$ sufficiently well, we can improve the time dependency of the regular convergence rate from $O(1/t)$ to $O(1/t^2)$, where $O(\cdot)$ is the big-O notation.

After we finish our discussion on the general setting, we will investigate a special case of this min-max optimization called minimal bounding sphere 
in Section \ref{sec:mbs}.

\section{Smooth Approximation of the Max Operator}
Let us define the new function $\fsub_\softer(\argu)$, which we shall use as substitute for $\func(\argu)$, as follows:
\begin{equation} \label{eq:fsub}
	\fsub_\softer(\argu) = \frac{1}{\softer} \log 
	\left(
		\sum_{i=1}^{\fcount} \exp(\softer \func_i(\argu))
	\right),
\end{equation}
where $\log(\cdot)$ is the natural-logarithm, $\exp(\cdot)$ is the natural-exponentiation and $\softer > 0$. This form of smooth maximum is also referred to as "LogExpSum". We now present a lemma regarding how well $\fsub_\softer(\cdot)$ approximates $\func(\cdot)$.
\begin{lemma} \label{lemma:fsub}
	The substitute function $\fsub_\softer(\cdot)$ is both lower and upper bounded by $\func(\cdot)$ with the upper bound having an additive redundancy of at most $\softer^{-1}\log\fcount$ such that
	\begin{equation*}
		\func(\argu) \leq \fsub_\softer(\argu) \leq \func(\argu) + \frac{\log \fcount}{\softer}.
	\end{equation*} 
\end{lemma}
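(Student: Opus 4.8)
The plan is to exploit two elementary facts: the natural logarithm is monotonically increasing, and each exponential $\exp(\softer \func_i(\argu))$ is strictly positive. Both halves of the inequality then reduce to sandwiching the interior sum $\sum_{i=1}^{\fcount} \exp(\softer \func_i(\argu))$ between a single dominant term and $\fcount$ copies of it, after which I take logarithms and divide by $\softer > 0$.

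For the lower bound, I would first observe that for any fixed $\argu$ there is an index attaining the maximum, so $\exp(\softer \func(\argu)) = \exp\bigl(\softer \max_i \func_i(\argu)\bigr)$ appears as one of the summands. Since every summand is positive, the full sum is at least this one term, giving $\sum_{i=1}^{\fcount} \exp(\softer \func_i(\argu)) \geq \exp(\softer \func(\argu))$. Applying $\softer^{-1}\log(\cdot)$, which preserves the inequality because $\softer > 0$ and $\log$ is increasing, yields $\fsub_\softer(\argu) \geq \func(\argu)$.

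For the upper bound, the complementary estimate is that each summand is bounded above by the largest, i.e. $\exp(\softer \func_i(\argu)) \leq \exp(\softer \func(\argu))$ since $\func_i(\argu) \leq \func(\argu)$ for all $i$; summing over the $\fcount$ indices gives $\sum_{i=1}^{\fcount} \exp(\softer \func_i(\argu)) \leq \fcount \exp(\softer \func(\argu))$. Taking $\softer^{-1}\log(\cdot)$ and using $\log(\fcount \exp(\softer \func(\argu))) = \log \fcount + \softer \func(\argu)$ produces exactly $\fsub_\softer(\argu) \leq \func(\argu) + \softer^{-1}\log \fcount$.

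This argument is essentially purely computational, so there is no genuine conceptual obstacle; the only points demanding care are keeping the inequality directions consistent (which relies on $\softer > 0$ so that division and the logarithm act monotonically) and correctly isolating the additive $\softer^{-1}\log\fcount$ term, which is precisely the slack introduced by over-counting the maximal term $\fcount$ times rather than once. I would present the two bounds in the order above so that the chain $\func(\argu) \leq \fsub_\softer(\argu) \leq \func(\argu) + \softer^{-1}\log\fcount$ reads directly off the two displayed inequalities.
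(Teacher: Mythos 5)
Your proof is correct and follows essentially the same route as the paper's: the lower bound comes from the positive sum dominating the single maximal term $\exp(\softer \func(\argu))$, and the upper bound from replacing each $\func_i(\argu)$ by $\func(\argu)$ so the sum is at most $\fcount\exp(\softer\func(\argu))$, with monotonicity of $\log$ and $\softer>0$ carrying both estimates through. Your write-up is in fact slightly more explicit than the paper's (which leaves the final upper-bound computation to the reader), but there is no substantive difference.
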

\begin{proof}
	By the definitions of $\log$ and $\exp$, we have,
	\begin{equation} \label{eq:f_alt}
		\func(\argu) = \frac{1}{\softer} \log(\exp(\softer f(\argu))) = \frac{1}{\softer} \log(\exp(\softer f_j(\argu))),
	\end{equation}
	where $j = \argmax{1\leq j\leq \fcount} \func_j(\argu)$  due to \eqref{eq:func}.
	Since $\log$ is a monotonically increasing function and $\exp(p) \geq 0$ for all $p \in \RR$, the combination of \eqref{eq:fsub} and \eqref{eq:f_alt} yields,
	\begin{equation} \label{eq:f_low}
		\func(\argu) \leq \frac{1}{\softer} \log 
		\left(
		\sum_{i=1}^{\fcount} \exp(\softer \func_i(\argu))
		\right) = \fsub_\softer(\argu).
	\end{equation}
	Again, due to monotonicity of $\log$ and $\exp$, we can replace each individual $\func_i(\cdot)$ in \eqref{eq:f_low} with $\func(\cdot)$ as an upper-bound. In combination with \eqref{eq:f_low}, this would result in the lemma.
\end{proof}

Lemma \ref{lemma:fsub} implies that if we optimize $\fsub_\softer(\cdot)$ instead, we would incur an additional redundancy of at most $(\log \fcount)/\softer$ as a cost for smoothing the target function.

\begin{corollary} \label{cor:gap}
	The gap between $\func(\argu)$ and $\func(\altarg)$ can be decomposed into a "smoothing" regret and the gap between their smoothed counterparts as follows:
	\begin{equation*}
		\func(\argu) - \func(\altarg) \leq
		\frac{\log\fcount}{\softer} +  \left[\fsub_\softer(\argu) - \fsub_\softer(\altarg)\right],
	\end{equation*}
	where the "smoothing" regret is $(\log\fcount)/\softer$.
\end{corollary}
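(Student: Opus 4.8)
The plan is to apply the two-sided bound of Lemma \ref{lemma:fsub} separately at the two arguments $\argu$ and $\altarg$, choosing the convenient side of the sandwich inequality for each. First I would invoke the lower bound at $\argu$, namely $\func(\argu) \leq \fsub_\softer(\argu)$, to replace the unsmoothed value $\func(\argu)$ by its smoothed counterpart while only increasing the right-hand side. Next I would invoke the upper bound at $\altarg$, namely $\fsub_\softer(\altarg) \leq \func(\altarg) + (\log\fcount)/\softer$, rearranged into $-\func(\altarg) \leq (\log\fcount)/\softer - \fsub_\softer(\altarg)$, so that the negated unsmoothed term is dominated by its smoothed counterpart plus the smoothing redundancy.

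Adding these two estimates would yield $\func(\argu) - \func(\altarg) \leq \fsub_\softer(\argu) - \fsub_\softer(\altarg) + (\log\fcount)/\softer$, which after grouping the constant smoothing term is exactly the claimed decomposition. The observation driving the argument is its asymmetry: for the minuend $\func(\argu)$ one uses the tight lower bound (incurring no redundancy), whereas for the subtrahend $\func(\altarg)$ one pays the full $(\log\fcount)/\softer$ redundancy through the upper bound, so the entire smoothing cost is collected into a single additive term $\softer^{-1}\log\fcount$.

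There is essentially no hard step here, as the corollary is an immediate consequence of Lemma \ref{lemma:fsub}; the only point requiring care is the bookkeeping of which inequality direction to apply at each argument, since using the wrong side would either fail to close the bound or double-count the logarithmic redundancy. Notably, no strong convexity, Lipschitz-smoothness, or gradient-boundedness assumptions enter at this stage, only the uniform bound already established in the preceding lemma.
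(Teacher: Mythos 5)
Your proof is correct and matches the paper's approach: the paper simply states that the corollary follows directly from Lemma \ref{lemma:fsub}, and your argument spells out exactly that derivation, applying the lower bound $\func(\argu) \leq \fsub_\softer(\argu)$ at $\argu$ and the upper bound $\fsub_\softer(\altarg) \leq \func(\altarg) + (\log\fcount)/\softer$ at $\altarg$, then adding. Your bookkeeping of which side of the sandwich to use at each argument is precisely the (implicit) content of the paper's one-line proof.
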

\begin{proof}
	The result follows directly from Lemma \ref{lemma:fsub}
\end{proof}

We introduce the short-hand notation for the optimal point minimizing $\func(\cdot)$ as:
\begin{equation} \label{eq:star}
\argu^* \define \argmin{\argu \in \RR^\dimcnt} \func(\argu).
\end{equation}

Next, we investigate derive some properties of this new function $\fsub_\softer(\cdot)$, namely the gradient and Hessian, after which we can investigate its strong-convexity and smoothness parameters.

\subsection{The gradient and the Hessian of the substitute function}
We start with a probability vector definition, which is used for writing weighted sums via expectations.
\begin{definition} \label{def:prob}
	Given the "smoother" $\softer$ and the argument $\argu$, we generate the probability vector $\prob_\softer(\argu)$ such that:
	\begin{equation*}
		\prob_{\softer,i}(\argu) = \frac{\exp(\softer \func_i(\argu))}{\sum_{j=1}^\fcount \exp(\softer \func_j(\argu))},
	\end{equation*}
	where $\prob_{\softer,i}(\argu)$ is the $i^{th}$ element of the vector $\prob_\softer(\argu)$.
\end{definition}

In the following lemmas, we compute the gradient and, from there, the Hessian of substitute function $\fsub_\softer(\cdot)$, which are used for the iterative optimization. 
\begin{lemma} \label{lemma:sub_grad}
	We can write the gradient $\grad \fsub_\softer(\argu)$ as a weighted combination of individual gradients $\grad \func_i(\argu)$ where the weights sum to $1$ such that
	\begin{equation*}
		\grad \fsub_\softer(\argu) = \E{\prob_\softer(\argu)}{\grad \func_i(\argu)},
	\end{equation*}
	where $\E{\prob_\softer(\argu)}{\cdot}$ is the expectation operation with respect to the probability mass function corresponding to the size-$\fcount$ vector $\prob_\softer(\argu)$. Each element $\prob_{\softer,i}(\argu)$ of $\prob_\softer(\argu)$ corresponds to the probability 
	assigned to $\grad \func_i(\argu)$ 
	as defined in Definition \ref{def:prob}.
\end{lemma}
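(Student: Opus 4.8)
The plan is to obtain the identity by a direct chain-rule computation, since $\fsub_\softer(\argu)$ in \eqref{eq:fsub} is an explicit composition of the logarithm, a finite sum, and the exponentials $\exp(\softer\func_i(\argu))$. First I would pull the constant $\softer^{-1}$ outside and differentiate the outer logarithm, which by the chain rule yields the reciprocal of the inner sum $\sum_{j=1}^{\fcount}\exp(\softer\func_j(\argu))$ multiplied by the gradient of that inner sum. Then I would differentiate the inner sum term by term: each summand satisfies $\grad\exp(\softer\func_i(\argu)) = \softer\exp(\softer\func_i(\argu))\,\grad\func_i(\argu)$, again by the chain rule, where the differentiability of each $\func_i$ (guaranteed by the assumed twice-differentiability) is exactly what licenses this step.

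The key observation is that the factor $\softer$ produced by differentiating the exponent cancels precisely against the prefactor $\softer^{-1}$. After this cancellation the gradient collapses to
\begin{equation*}
	\grad\fsub_\softer(\argu) = \sum_{i=1}^{\fcount}\frac{\exp(\softer\func_i(\argu))}{\sum_{j=1}^{\fcount}\exp(\softer\func_j(\argu))}\,\grad\func_i(\argu),
\end{equation*}
and I would recognize the scalar coefficient of $\grad\func_i(\argu)$ as exactly $\prob_{\softer,i}(\argu)$ from Definition \ref{def:prob}. Rewriting the resulting weighted sum as an expectation with respect to the mass function $\prob_\softer(\argu)$ then gives the claimed form $\grad\fsub_\softer(\argu) = \E{\prob_\softer(\argu)}{\grad\func_i(\argu)}$.

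Finally I would verify the stated fact that the weights sum to one, which is immediate: summing $\prob_{\softer,i}(\argu)$ over $i$ puts $\sum_{i=1}^{\fcount}\exp(\softer\func_i(\argu))$ over the identical denominator, giving $1$, so $\prob_\softer(\argu)$ is a bona fide probability vector and the expectation is well defined. There is no genuine obstacle here — the argument is a routine derivative calculation — and the only point demanding care is index bookkeeping: keeping the normalizing summation index (written $j$) distinct from the differentiation index (written $i$) so that the term-by-term differentiation and the subsequent regrouping into the softmax weights are carried out without conflating the two.
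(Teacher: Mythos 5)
Your proposal is correct and follows essentially the same route as the paper, which simply states that the result follows by taking partial derivatives of \eqref{eq:fsub}; you have merely written out the chain-rule computation, the cancellation of $\softer$ with $\softer^{-1}$, and the identification of the coefficients with $\prob_{\softer,i}(\argu)$ explicitly. No gaps; your version is a fully expanded rendering of the paper's one-line argument.
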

\begin{proof}
	The result directly follows after taking the partial derivatives of \eqref{eq:fsub} with respect to each element in $\argu$.
\end{proof}

\begin{lemma} \label{lemma:sub_hess}
	Considering the gradient $\grad \func_i(\argu)$ as a random vector and the Hessian $\hess \func_i(\argu)$ as a random matrix, each having $\fcount$ possible realizations generated from the probability mass function corresponding to the vector $\prob_\softer(\argu)$, the Hessian $\hess \fsub_\softer(\argu)$ can be computed with the expectation of $\hess \func_i(\argu)$ and the covariance matrix of $\grad \func_i(\argu)$ as follows:
	\begin{equation*}
		\hess \fsub_\softer(\argu) = 
		\softer \left(
			\Cov{\prob_\softer(\argu)}{\grad \func_i(\argu)}
		\right)
		+ \E{\prob_\softer(\argu)}{\hess \func_i(\argu)},
	\end{equation*}
	where $\E{\prob_\softer(\argu)}{\cdot}$, $\prob_\softer(\argu)$ are defined as in Lemma \ref{lemma:sub_grad} and the covariance matrix 
	is given as,
	\begin{equation} \label{eq:cov}
		\Cov{\prob_\softer(\argu)}{\grad \func_i(\argu)}
		\define \E{\prob_\softer(\argu)}{\grad \func_i(\argu) \grad \func_i(\argu)^T} 
		- \E{\prob_\softer(\argu)}{\grad \func_i(\argu)}\E{\prob_\softer(\argu)}{\grad \func_i(\argu)}^T.
	\end{equation}
\end{lemma}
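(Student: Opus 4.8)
The plan is to obtain the Hessian by differentiating the gradient expression from Lemma \ref{lemma:sub_grad} a second time. Writing that gradient as the weighted sum $\grad \fsub_\softer(\argu) = \sum_{i=1}^\fcount \prob_{\softer,i}(\argu)\, \grad \func_i(\argu)$, I would apply the product rule to each weighted term, which splits the Hessian into two contributions: one in which the derivative acts on the weights $\prob_{\softer,i}(\argu)$ and one in which it acts on the individual gradients $\grad \func_i(\argu)$. The second contribution is immediate, since differentiating $\grad \func_i(\argu)$ produces $\hess \func_i(\argu)$ and summing against the weights gives exactly the expectation term $\E{\prob_\softer(\argu)}{\hess \func_i(\argu)}$ appearing in the claim. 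The twice-differentiability assumed on each $\func_i$ guarantees these second derivatives exist and that the order of differentiation may be interchanged.

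The crux is the first contribution, which requires the derivative of the softmax-type weights, and this is where I expect the main algebraic care to be needed. Writing $\prob_{\softer,i}(\argu) = \exp(\softer \func_i(\argu))/Z(\argu)$ with normalizer $Z(\argu) = \sum_{j=1}^\fcount \exp(\softer \func_j(\argu))$, I would apply the quotient rule, using $\grad Z(\argu) = \softer \sum_{j} \exp(\softer \func_j(\argu))\, \grad \func_j(\argu)$, so that the normalizing factor reassembles into the weights themselves. I expect the key identity to be
\[
	\grad \prob_{\softer,i}(\argu) = \softer\, \prob_{\softer,i}(\argu) \left( \grad \func_i(\argu) - \E{\prob_\softer(\argu)}{\grad \func_i(\argu)} \right),
\]
i.e. each weight's gradient is proportional, with factor $\softer$, to the centered individual gradient. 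This is precisely where the leading factor $\softer$ in front of the covariance originates.

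With this identity in hand, the first contribution is the matrix $\sum_{i} \grad \func_i(\argu)\, \grad \prob_{\softer,i}(\argu)^T$, which upon substitution becomes $\softer \sum_{i} \prob_{\softer,i}(\argu)\, \grad \func_i(\argu)\, \big(\grad \func_i(\argu) - \E{\prob_\softer(\argu)}{\grad \func_i(\argu)}\big)^T$. Expanding the outer product and using $\sum_{i} \prob_{\softer,i}(\argu)\, \grad \func_i(\argu) = \E{\prob_\softer(\argu)}{\grad \func_i(\argu)}$ from Lemma \ref{lemma:sub_grad}, this collapses to $\softer$ times $\E{\prob_\softer(\argu)}{\grad \func_i(\argu)\,\grad \func_i(\argu)^T} - \E{\prob_\softer(\argu)}{\grad \func_i(\argu)}\,\E{\prob_\softer(\argu)}{\grad \func_i(\argu)}^T$, which is exactly $\softer\, \Cov{\prob_\softer(\argu)}{\grad \func_i(\argu)}$ by \eqref{eq:cov}. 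Adding back the expectation-of-Hessian term then yields the claimed formula. The only subtlety to track carefully is the outer-product and transpose bookkeeping, so that the symmetric covariance structure emerges cleanly rather than a lopsided product.
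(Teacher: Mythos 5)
Your proposal is correct and follows the same route as the paper, which simply asserts that the result ``directly follows from taking further partial derivatives of the gradient in Lemma \ref{lemma:sub_grad}''; you have filled in exactly the computation that assertion leaves implicit. In particular, your key identity $\grad \prob_{\softer,i}(\argu) = \softer\, \prob_{\softer,i}(\argu)\left(\grad \func_i(\argu) - \E{\prob_\softer(\argu)}{\grad \func_i(\argu)}\right)$ and the subsequent collapse into the covariance term are the correct and complete details of that differentiation.
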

\begin{proof}
	The result directly follows from taking further partial derivatives of gradient in Lemma \ref{lemma:sub_grad}.
\end{proof}

In the following section, we explain our methodology for accelerating the convergence rate.

\section{Accelerated Optimization of the 
	Approximation} \label{sec:acce}
We utilize Nesterov's accelerated gradient descent method for smooth and strongly-convex functions, for which more details are given in \cite{Bubeck}. The algorithm is an iterative one, where the iterations are done in an alternating fashion. Starting with the initial argument pair $\argu_1 = \altarg_1$, we have the following iterative relations for $\argu_t$ and $\altarg_t$ for $t \geq 1$:
\begin{equation} \label{eq:acce_grad}
	\begin{aligned}
		\argu_{t+1} &= \altarg_t - \frac{1}{\hesupp_\softer} \grad \fsub_\softer(\altarg_t),\\
		\altarg_{t+1} &= \argu_{t+1} + \frac{\sqrt{\cond_\softer}-1}{\sqrt{\cond_\softer}+1} \left(\argu_{t+1} - \argu_t\right)
		,
	\end{aligned}
\end{equation}
with $\cond_\softer$ being the condition number of the Hessian $\hess \fsub_\softer(x)$ in Lemma \ref{lemma:hess_bound}, which is computed as
\begin{equation} \label{eq:hess_bound}
	\begin{gathered}
		\cond_\softer = \hesupp_\softer/\heslow_\softer,
		\\\text{ for }
		\heslow_\softer \iden 
		\preceq \hess \fsub_\softer(\argu) 
		\preceq \hesupp_\softer \iden, 
		\text{ for all }
		\argu \in \argset_\softer,
	\end{gathered}
\end{equation}
where $\heslow_\softer$ and $\hesupp_\softer$ are the lower and upper bounds on the eigenvalues of Hessian $\hess \fsub_\softer(\argu)$, respectively, the identity matrix of $\dimcnt \times \dimcnt$ dimensions is denoted as $\iden$, and $\argset_\softer \subseteq \RR^\dimcnt$ is a set guaranteed to include the convex-hull of all iterations $\{\argu_t\}_{t=1}^\infty$, $\{\altarg_t\}_{t=1}^\infty$ and the optimal point $\argu^*$ as defined in \eqref{eq:star}. 

Generating the Hessian upper-bound (the smoothness parameter) $\hesupp_\softer$, and consequently the condition number $\cond_\softer$ for the set $\argset_\softer$ is sufficient as in \eqref{eq:hess_bound}. The reason is twofold. Firstly, the optimality gap guarantee shown in the following as Lemma \ref{lemma:opt_gap} is dependent upon upper-bounding the Hessian on line segments pair-wise connecting the algorithm iterations ($\{\argu_t\}_{t=1}^\infty$, $\{\altarg_t\}_{t=1}^\infty$) and the optimal point $\argu^*$ via $\hesupp_\softer$. All of such segments are encapsulated by the convex-hull of $\{\argu_t\}_{t=1}^\infty$, $\{\altarg_t\}_{t=1}^\infty$ and the optimal point $\argu^*$. Secondly, this convex-hull is itself a  subset of $\argset_\softer$ as previously defined.
\begin{lemma} \label{lemma:opt_gap}
	The following optimality gap is guaranteed for $\argu_t$:
	\begin{equation*}
		\fsub_\softer(\argu_t) - \fsub_\softer(\argu^*) 
		\leq 
		\left(
			\frac{\heslow_\softer}{2} \|\argu_1-\argu^*\|^2 + \fsub_\softer(\argu_1) - \fsub_\softer(\argu^*)
		\right)
		\exp\left(
			-\frac{t-1}{\sqrt{\cond_\softer}}
		\right),
	\end{equation*}
	where $\cond_\softer = \hesupp_\softer / \heslow_\softer$ is the condition number. $\hesupp_\softer$ and $\heslow_\softer$ are the strong-convexity and Lipschitz-smoothness parameters, respectively, and $\argu^*$ is the optimal point as defined in \eqref{eq:star}.
\end{lemma}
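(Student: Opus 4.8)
The plan is to recognize this as the standard convergence guarantee for Nesterov's accelerated gradient method on a strongly-convex, smooth objective, and to prove it via the estimate-sequence technique of Nesterov as presented in \cite{Bubeck}. The first step is to convert the Hessian bounds \eqref{eq:hess_bound} into the two analytic properties the analysis actually consumes: since $\heslow_\softer \iden \preceq \hess \fsub_\softer(\argu) \preceq \hesupp_\softer \iden$ for every $\argu \in \argset_\softer$, the function $\fsub_\softer$ is $\heslow_\softer$-strongly convex and $\hesupp_\softer$-Lipschitz-smooth on $\argset_\softer$. Because $\argset_\softer$ was constructed to contain the convex hull of all iterates $\{\argu_t\}$, $\{\altarg_t\}$ and the optimum $\argu^*$, the first-order consequences that the argument invokes, namely the smoothness descent inequality and the strong-convexity quadratic lower bound, are valid along every line segment connecting these points.

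Next I would introduce a sequence of quadratic \emph{estimate functions} $\Phi_t(\argu) = \Phi_t^\star + \tfrac{\gamma_t}{2}\|\argu - v_t\|^2$, initialized with $\gamma_1 = \heslow_\softer$, $v_1 = \argu_1$ and $\Phi_1^\star = \fsub_\softer(\argu_1)$, so that $\Phi_1(\argu) = \fsub_\softer(\argu_1) + \tfrac{\heslow_\softer}{2}\|\argu - \argu_1\|^2$. The heart of the proof is to establish by induction the two defining properties of an estimate sequence: (i) $\Phi_t(\argu) \leq (1-\lambda_t)\fsub_\softer(\argu) + \lambda_t \Phi_1(\argu)$ for all $\argu$, with a decay factor $\lambda_t$ to be pinned down; and (ii) $\fsub_\softer(\argu_t) \leq \Phi_t^\star = \min_\argu \Phi_t(\argu)$. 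Property (i) is propagated by taking the convex combination of $\Phi_t$ with the strong-convexity lower bound of $\fsub_\softer$ anchored at $\altarg_t$, while property (ii) is propagated using the smoothness descent step $\argu_{t+1} = \altarg_t - \hesupp_\softer^{-1}\grad\fsub_\softer(\altarg_t)$ from \eqref{eq:acce_grad}.

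The key bookkeeping step, and the place I expect the most friction, is verifying that the specific momentum coefficient $\tfrac{\sqrt{\cond_\softer}-1}{\sqrt{\cond_\softer}+1}$ appearing in \eqref{eq:acce_grad} is exactly the value that (a) keeps the curvature parameter constant, $\gamma_t \equiv \heslow_\softer$, and (b) drives the step parameter of the estimate sequence to the fixed point $\sqrt{\heslow_\softer/\hesupp_\softer} = 1/\sqrt{\cond_\softer}$, so that the decay factor collapses to the clean geometric form $\lambda_t = (1 - 1/\sqrt{\cond_\softer})^{t-1}$. This amounts to solving the quadratic recursion for the momentum parameter and checking that it coincides with the coefficient hard-coded in the algorithm; it is routine algebra but must be handled carefully to recover precisely $1/\sqrt{\cond_\softer}$ rather than a nearby quantity.

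Finally I would combine the two properties at $\argu = \argu^*$: property (ii) gives $\fsub_\softer(\argu_t) \leq \Phi_t^\star \leq \Phi_t(\argu^*)$, and property (i) at $\argu^*$ gives $\Phi_t(\argu^*) \leq (1-\lambda_t)\fsub_\softer(\argu^*) + \lambda_t \Phi_1(\argu^*)$. Subtracting $\fsub_\softer(\argu^*)$ and substituting $\Phi_1(\argu^*) = \fsub_\softer(\argu_1) + \tfrac{\heslow_\softer}{2}\|\argu_1 - \argu^*\|^2$ yields
\[
\fsub_\softer(\argu_t) - \fsub_\softer(\argu^*) \leq \lambda_t\left(\frac{\heslow_\softer}{2}\|\argu_1 - \argu^*\|^2 + \fsub_\softer(\argu_1) - \fsub_\softer(\argu^*)\right),
\]
which is exactly the bracketed expression in the claim. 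The last step is the elementary inequality $\lambda_t = (1 - 1/\sqrt{\cond_\softer})^{t-1} \leq \exp(-(t-1)/\sqrt{\cond_\softer})$, obtained from $1 - u \leq \exp(-u)$ with $u = 1/\sqrt{\cond_\softer}$, which upgrades the geometric factor to the exponential form stated in the lemma.
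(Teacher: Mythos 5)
Your proposal is correct and is essentially the same argument the paper uses: the paper's proof simply defers to the estimate-sequence proof of Nesterov's accelerated gradient descent in the smooth and strongly convex case from \cite{Bubeck}, keeping the term $\fsub_\softer(\argu_1) - \fsub_\softer(\argu^*)$ unreplaced, which is precisely the argument you reconstruct (including the final step $(1 - 1/\sqrt{\cond_\softer})^{t-1} \leq \exp(-(t-1)/\sqrt{\cond_\softer})$). The only difference is that you spell out the induction that the paper leaves as a citation.
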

\begin{proof}
	The proof directly follows a similar formulation given in \cite{Bubeck} under "the smooth and strong convex case" subsection of the section "Nesterov's accelerated gradient descent". The only exception is that we do not replace $\fsub_\softer(\argu_1) - \fsub_\softer(\argu^*)$ with an upper-bound and leave it as is.
\end{proof}

\subsection{Parameters of Strong-Convexity 
	and Lipschitz-Smoothness 
}
To compute $\cond_\softer$, we bound the eigenvalues of $\hess \fsub_\softer(\argu)$.
\begin{lemma} \label{lemma:hess_bound}
	We can lower and upper bound eigenvalues of the Hessian matrix $\hess \fsub_\softer(\argu)$ for $\argu \in \argset_\softer$ as follows:
	\begin{equation*}
		\left(
		\min_{1\leq i\leq \fcount} \heslow_{\softer,i}
		\right) \iden
		\preceq \hess \fsub_\softer(\argu)
		\preceq \left(
		\softer \gradupp_\softer^2 + \max_{1\leq i\leq \fcount} \hesupp_{\softer,i}
		\right) \iden
		,
	\end{equation*}
	where $\heslow_{\softer,i}$ and $\hesupp_{\softer,i}$ are further defined as the strong-convexity and smoothness parameters for the components from the "$\max_{1\leq i\leq \fcount} \func_i(\cdot)$" operator generating $\func(\argu)$, i.e. $\func_i(\argu)$, respectively, such that we have $\heslow_{\softer,i} \iden \preceq \hess \func_i(\argu) \preceq \hesupp_{\softer,i} \iden$, for $\argu \in \argset_\softer$. The parameter $\gradupp_\softer$ is a common gradient norm bound for each $\func_i(\cdot)$ such that $\gradupp_\softer \geq \grad \func_i(\argu)$ for each $1\leq i\leq \fcount$ and $\argu \in \argset_\softer$.
\end{lemma}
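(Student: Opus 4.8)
The plan is to begin from the Hessian decomposition established in Lemma~\ref{lemma:sub_hess},
\[
\hess \fsub_\softer(\argu) = \softer\,\Cov{\prob_\softer(\argu)}{\grad \func_i(\argu)} + \E{\prob_\softer(\argu)}{\hess \func_i(\argu)},
\]
and to bound the two summands separately in the Loewner (positive semidefinite) order, since a pair of such bounds adds directly to give matching bounds for $\hess \fsub_\softer(\argu)$.

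For the expectation term, I would note that $\E{\prob_\softer(\argu)}{\hess \func_i(\argu)}$ is a convex combination of the individual Hessians, with weights $\prob_{\softer,i}(\argu)$ that sum to $1$ by Definition~\ref{def:prob}. From the per-component bounds $\heslow_{\softer,i}\iden \preceq \hess\func_i(\argu) \preceq \hesupp_{\softer,i}\iden$, every component also obeys the uniform bounds $(\min_i \heslow_{\softer,i})\iden \preceq \hess\func_i(\argu) \preceq (\max_i \hesupp_{\softer,i})\iden$, and the monotonicity of the expectation in the Loewner order then yields
\[
\Big(\min_{1\leq i\leq \fcount}\heslow_{\softer,i}\Big)\iden \preceq \E{\prob_\softer(\argu)}{\hess\func_i(\argu)} \preceq \Big(\max_{1\leq i\leq \fcount}\hesupp_{\softer,i}\Big)\iden.
\]

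For the covariance term, the lower bound is immediate: every covariance matrix is positive semidefinite and $\softer>0$, so $\softer\,\Cov{\prob_\softer(\argu)}{\grad\func_i(\argu)} \succeq 0$, and adding this to the lower bound above gives the claimed left-hand inequality. The upper bound is where the real work lies. Abbreviating the random gradient as $g \define \grad\func_i(\argu)$, from the definition in \eqref{eq:cov} I would first discard the subtracted outer product $\E{\prob_\softer(\argu)}{g}\E{\prob_\softer(\argu)}{g}^T \succeq 0$ to obtain $\Cov{\prob_\softer(\argu)}{g} \preceq \E{\prob_\softer(\argu)}{g g^T}$. I would then use the rank-one structure of each realization: since the only nonzero eigenvalue of $g g^T$ is $\|g\|^2$, we have $g g^T \preceq \|g\|^2 \iden$, and taking expectations together with the gradient-norm bound $\|g\| \leq \gradupp_\softer$ gives $\E{\prob_\softer(\argu)}{g g^T} \preceq \E{\prob_\softer(\argu)}{\|g\|^2}\iden \preceq \gradupp_\softer^2 \iden$. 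Multiplying by $\softer$ and combining with the expectation upper bound produces the right-hand inequality.

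I expect the \emph{main obstacle} to be this covariance upper bound, specifically the justification of the two Loewner relations $\Cov{\prob_\softer(\argu)}{g}\preceq\E{\prob_\softer(\argu)}{g g^T}$ and $g g^T \preceq \|g\|^2\iden$. Once these are established, the rest of the argument reduces to the monotonicity of expectation in the positive semidefinite order, which is routine.
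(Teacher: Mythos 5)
Your proposal is correct and follows essentially the same route as the paper's own proof: it uses the decomposition from Lemma~\ref{lemma:sub_hess}, bounds the expectation term by the min/max of the component parameters, drops the positive semidefinite subtracted outer product $\E{\prob_\softer(\argu)}{\grad\func_i(\argu)}\E{\prob_\softer(\argu)}{\grad\func_i(\argu)}^T$ for the covariance upper bound, and bounds each $\grad\func_i(\argu)\grad\func_i(\argu)^T$ by $\|\grad\func_i(\argu)\|^2\iden \preceq \gradupp_\softer^2\iden$. If anything, your justification of the covariance steps (Cauchy--Schwarz-style rank-one bound and positive semidefiniteness of the covariance) is stated a bit more carefully than the paper's.
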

\begin{proof}
	We start with proving the lower-bound relation. Using 
	Lemma \ref{lemma:sub_hess}, we obtain
	\begin{equation*}
		\hess \fsub_\softer(\argu) 
		\succeq \E{\prob_\softer(\argu)}{\hess \func_i(\argu)}
		,
	\end{equation*}
	since the covariance matrix $\Cov{\prob_\softer(\argu)}{\grad \func_i(\argu)}$ is lower-bounded by $0$ as it is a convex combination of rank-$1$ self-outer-product matrices with their lowest eigenvalue being $0$.
	
	The expectation operation $\E{\prob_\softer(\argu)}{\cdot}$ is linear. Thus we can replace each $\hess \func_i(\argu)$ with its lower-bound $\heslow_{\softer,i} \iden$ without affecting the inequality relation $\succeq$. After taking the constant identity matrix $\iden$ outside of the expectation, we have the renewed relation
	\begin{equation*}
		\hess \fsub_\softer(\argu) 
		\succeq \E{\prob_\softer(\argu)}{\heslow_{\softer,i}} \times \iden
		.
	\end{equation*}
	Since the expectation is a convex combination of scalars $\heslow_{\softer,i}$, we further lower bound by replacing the expectation with $\left(\min_{1\leq i\leq \fcount} \heslow_{\softer,i}\right)$, which gives the lower-bound of this lemma.
	
	For the upper-bound, we can generate
	\begin{equation*}
		\hess \fsub_\softer(\argu) 
		\preceq \left(
		\max_{1\leq i\leq \fcount} \hesupp_{\softer,i}
		\right) \iden
		+ \softer \left(
		\Cov{\prob_\softer(\argu)}{\grad \func_i(\argu)}
		\right)
	\end{equation*}
	using Lemma \ref{lemma:sub_hess} by upper bounding each $\hess \func_i(\argu)$ with $\hesupp_{\softer,i} \iden$ and the resulting expectation with $\max_{1\leq i\leq \fcount} \hesupp_{\softer,i}$ similar to the lower-bound.
	
	We can upper bound the covariance matrix by first noting that the eigenvalues of an $\dimcnt$-dimensional outer-product $uv^T$ are $u^Tv$ and $\dimcnt-1$ zeros. Consequently, we upper bound it by replacing the negative outer-product, i.e. $-\E{\prob_{\softer}(\argu)}{\grad \func_i(\argu)} \E{\prob_{\softer}(\argu)}{\grad \func_i(\argu)}^T$, in \eqref{eq:cov} with $0$. Then, utilizing the linearity of expectation again, we get the final upper-bound by replacing the outer-product $\grad \func_i(\argu) \grad \func_i(\argu)^T$ inside expectation with $\|\grad \func_i(\argu)\|^2 \iden$. The resulting upper-bound is given as
	\begin{equation*}
		\hess \fsub_\softer(\argu) 
		\preceq \left(
		\max_{1\leq i\leq \fcount} \hesupp_{\softer,i}
		\right) \iden
		+ \softer \left(
		\E{\prob_{\softer}(\argu)}{\|\grad \func_i(\argu)\|^2}
		\right) \iden
		,
	\end{equation*}
	after taking the constant identity matrix $\iden$ outside of expectation. We can replace the scalar $\E{\prob_{\softer}(\argu)}{\|\grad \func_i(\argu)\|^2}$ with a common squared gradient-norm bound $\gradupp_\softer^2$, which gives the upper-bound relation of this lemma, thus concluding the proof.
\end{proof}

\subsection{Algorithm Description}

We start at some point $\argu_1$. 
We determine the "smoother" $\softer$ needed to achieve the requested optimality gap $\optgap$ and the set $\argset_\softer \subseteq \RR^\dimcnt$ such that $\argset_\softer$ includes the optimal point $\argu^*$ and all future iterations $\{\argu_t\}_{t=1}^\infty$, $\{\altarg_t\}_{t=1}^\infty$. We use the update rules in \eqref{eq:acce_grad} after determining the common gradient norm bound $\gradupp_\softer$, the individual strong-convexity and Lipschitz-smoothness parameters $\{\heslow_{\softer,i}\}_{i=1}^\fcount$ and $\{\hesupp_{\softer,i}\}_{i=1}^\fcount$, respectively, via the set $\argset_\softer$. The condition number $\cond_\softer$ and the smoothness parameter $\hesupp_\softer$ are calculated using the lower and upper bounds in Lemma \ref{lemma:hess_bound}. The pseudo-code is given in Algorithm \ref{alg:smooth}. For this algorithm, we have the following performance result.
\begin{algorithm}[t] \label{alg:smooth}
	\caption{Min-Max Optimizer} 
	\SetKwInOut{Input}{Input}
	\SetKwInOut{Output}{Output}
	\SetKwInOut{Initialization}{Initialization}
	\SetKwInOut{RunTime}{Run-Time}
	\Input{$\func(\cdot)$ - the optimization target  such that $\func(\argu) = \max_{1\leq i\leq \fcount} \func_i(\argu)$ for $\argu \in \RR^\dimcnt$,\newline
	$\optgap$ - requested optimality gap guarantee. 
	}
	\Output{$\argu_t$ for $t\geq1$.}
	\vspace{8pt}
	\Initialization{}
	Set $s = 2\optgap^{-1}\log\fcount$ as the "smoother" for $\func(\cdot)$.
	\\Generate $\fsub_\softer(\argu) = \softer^{-1} \log 
	\left(
	\sum_{i=1}^{\fcount} \exp(\softer \func_i(\argu))
	\right)$, the smooth approximation of $\func(\cdot)$.
	\\Determine $\argset_\softer \subseteq \RR^\dimcnt$, the subset containing optimal point and all future iterations.
	\\Determine $\{\heslow_{\softer,i}\}_{i=1}^\fcount$, the strong-convexity parameters for each $\func_i(\cdot)$ paired with domain $\argset_\softer$.
	\\Set $\heslow_{\softer} = \min_{1\leq i\leq \fcount} \heslow_{\softer,i}$ as the strong-convexity parameter of $\fsub_\softer(\cdot)$.
	\\Determine $\{\hesupp_{\softer,i}\}_{i=1}^\fcount$, the smoothness parameters for each $\func_i(\cdot)$ paired with domain $\argset_\softer$.
	\\Determine $\gradupp_\softer$, the common gradient norm bound for all $\func_i(\cdot)$ paired with domain $\argset_\softer$.
	\\Set $\hesupp_{\softer} = \softer \gradupp_\softer^2 + \max_{1\leq i\leq \fcount} \hesupp_{\softer,i}$ as the smoothness parameter of $\fsub_\softer$.
	\\Calculate $\cond_\softer = \hesupp_\softer/\heslow_\softer$, the condition number of $\fsub_\softer(\cdot)$. 
	\\\vspace{8pt}
	\RunTime{}
	Initialize $\argu_1$, e.g. $\argu_1 = 0$.\\
	Set $\altarg_1 = \argu_1$.\\
	Start with $t=1$.\\
	\While{not terminated by user}{
		Calculate $\prob_\softer(\altarg_t)$, the probability vector, from Definition \ref{def:prob} via $\softer$ and argument $\altarg_t$.\\
		$\argu_{t+1} = \altarg_t - \hesupp_\softer^{-1}\; 
		\E{\prob_\softer(\altarg_t)}{\grad \func_i(\altarg_t)}
		$, using Lemma \ref{lemma:sub_grad} and \eqref{eq:acce_grad}\\ \label{line:x}
		$\altarg_{t+1} = \argu_{t+1} + \left(
			1 - 2\left(\sqrt{\cond_\softer} + 1\right)^{-1}
		\right) \left(\argu_{t+1} - \argu_t\right)$
		\\ \label{line:y}
		$t \leftarrow t+1$.
	}
\end{algorithm}

\begin{theorem} \label{thm:itecnt}
	We run Algorithm \ref{alg:smooth} for a given optimality gap guarantee $\optgap$.
	Then, we achieve the gap $\func(\argu_t) - \func(\argu^*) \leq \optgap$ after sufficient iterations $t$ such that:
	\begin{equation*}
		\begin{gathered}
			t \in O\left(\sqrt{\frac{\log\fcount}{\optgap}} \log \frac{1}{\optgap}\right),
			\text{ since }
			\\t = 1 + \sqrt{\frac{2}{\optgap}\frac{\gradupp_s^2 \log \fcount}{\heslow_\softer} + \frac{ \tilde{\hesupp}_{\softer}}{\heslow_{\softer}}} 
			\log \left(
				\frac{1}{\optgap} \left(
					\heslow_\softer \dist_\softer^2 + 2\gradupp_\softer\dist_\softer
				\right)
			\right),
		\end{gathered}
	\end{equation*}
	where $O(\cdot)$ is the big-O notation for asymptotic 
	upper-bounding, $\fcount$ is the number of functions $\func_i(\cdot)$ contributing to the $\max$ operation resulting in $\func(\cdot)$, $\gradupp_\softer$ is the common gradient norm bound for each component function $\func_i(\argu)$ in the $\max$ operator 
	such that $\|\grad \func_i(\argu)\| \leq \gradupp_\softer$, for all $1\leq i\leq \fcount$, $\argu \in \argset_\softer$.
	$\heslow_{\softer}$ is the strong-convexity parameter of the approximation $\fsub_\softer(\argu)$, $\tilde{\hesupp}_\softer = \max_{1\leq i\leq \fcount} \hesupp_{\softer,i}$ is the pseudo-smoothness parameter upper bounding the matrix $\E{\prob_{\softer}(\argu)}{\hess \fsub_\softer(\argu)}$, and $\dist_\softer$ is the unknown initial distance between $\argu_1$ and $\argu^*$.
\end{theorem}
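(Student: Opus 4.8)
The plan is to bound the true-function gap $\func(\argu_t) - \func(\argu^*)$ by combining the smoothing error with the convergence guarantee of Nesterov's method in Lemma~\ref{lemma:opt_gap}, and then to solve the resulting inequality for $t$. First I would invoke Corollary~\ref{cor:gap} with $\argu = \argu_t$ and $\altarg = \argu^*$ to split
\begin{equation*}
	\func(\argu_t) - \func(\argu^*) \leq \frac{\log\fcount}{\softer} + \left[\fsub_\softer(\argu_t) - \fsub_\softer(\argu^*)\right].
\end{equation*}
With the algorithm's choice $\softer = 2\optgap^{-1}\log\fcount$ the smoothing term collapses to exactly $\optgap/2$, so it remains to drive the smoothed gap $\fsub_\softer(\argu_t) - \fsub_\softer(\argu^*)$ below $\optgap/2$.

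Next I would feed Lemma~\ref{lemma:opt_gap} into this target. Writing $C \define \tfrac{\heslow_\softer}{2}\|\argu_1 - \argu^*\|^2 + \fsub_\softer(\argu_1) - \fsub_\softer(\argu^*)$ for the leading constant, Lemma~\ref{lemma:opt_gap} gives $\fsub_\softer(\argu_t) - \fsub_\softer(\argu^*) \leq C\exp(-(t-1)/\sqrt{\cond_\softer})$; requiring the right-hand side to be at most $\optgap/2$ and inverting the exponential yields the sufficient condition $t \geq 1 + \sqrt{\cond_\softer}\,\log(2C/\optgap)$. The square-root prefactor is then made explicit through Lemma~\ref{lemma:hess_bound} and the definition $\cond_\softer = \hesupp_\softer/\heslow_\softer$: substituting $\hesupp_\softer = \softer\gradupp_\softer^2 + \tilde{\hesupp}_\softer$ and $\softer = 2\optgap^{-1}\log\fcount$ gives
\begin{equation*}
	\sqrt{\cond_\softer} = \sqrt{\frac{2}{\optgap}\frac{\gradupp_\softer^2\log\fcount}{\heslow_\softer} + \frac{\tilde{\hesupp}_\softer}{\heslow_\softer}},
\end{equation*}
which is precisely the square-root factor appearing in the statement.

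The remaining---and, I expect, the most delicate---step is bounding the constant $C$, since $\argu^*$ is the minimizer of $\func$ rather than of $\fsub_\softer$ and $\|\argu_1 - \argu^*\| = \dist_\softer$ is unknown. For the quadratic term I would simply retain $\tfrac{\heslow_\softer}{2}\dist_\softer^2$. For $\fsub_\softer(\argu_1) - \fsub_\softer(\argu^*)$ I would use that, by Lemma~\ref{lemma:sub_grad}, $\grad\fsub_\softer$ is a convex combination of the individual gradients $\grad\func_i$, each bounded in norm by $\gradupp_\softer$ on $\argset_\softer$; hence $\fsub_\softer$ is $\gradupp_\softer$-Lipschitz along the segment joining $\argu^*$ and $\argu_1$ (both contained in $\argset_\softer$), giving $\fsub_\softer(\argu_1) - \fsub_\softer(\argu^*) \leq \gradupp_\softer\dist_\softer$ regardless of the sign of this difference. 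Combining these two bounds yields $2C \leq \heslow_\softer\dist_\softer^2 + 2\gradupp_\softer\dist_\softer$, and monotonicity of the logarithm turns the sufficient condition above into the exact expression claimed for $t$.

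Finally I would read off the asymptotics: treating $\gradupp_\softer$, $\heslow_\softer$, $\tilde{\hesupp}_\softer$ and $\dist_\softer$ as fixed quantities, the dominant term under the square root is of order $(\log\fcount)/\optgap$ while the logarithmic factor is $O(\log(1/\optgap))$, so that $t \in O\!\left(\sqrt{(\log\fcount)/\optgap}\,\log(1/\optgap)\right)$. The main obstacle is the Lipschitz bounding of $C$ together with the justification that the segment between $\argu_1$ and $\argu^*$ lies inside $\argset_\softer$ so that the gradient-norm bound $\gradupp_\softer$ genuinely applies; the rest is algebraic rearrangement.
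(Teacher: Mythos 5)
Your proposal is correct and follows essentially the same route as the paper: the same decomposition via Corollary~\ref{cor:gap} with $\softer = 2\optgap^{-1}\log\fcount$ splitting the budget as $\optgap/2 + \optgap/2$, the same inversion of the exponential bound in Lemma~\ref{lemma:opt_gap}, the same substitution $\cond_\softer = (\softer\gradupp_\softer^2 + \tilde{\hesupp}_\softer)/\heslow_\softer$ from Lemma~\ref{lemma:hess_bound}, and the same bound $\fsub_\softer(\argu_1) - \fsub_\softer(\argu^*) \leq \gradupp_\softer\dist_\softer$ (the paper gets this from the convexity inequality $\fsub_\softer(\argu_1) - \fsub_\softer(\argu^*) \leq \grad\fsub_\softer(\argu_1)^T(\argu_1 - \argu^*)$ rather than your Lipschitz-along-the-segment argument, but both are immediate given Lemma~\ref{lemma:sub_grad} and the gradient bound on $\argset_\softer$). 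In fact your write-up is more explicit than the paper's own proof sketch, and the final expression for $t$ matches the theorem exactly.
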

\begin{proof}
	From Lemma \ref{lemma:opt_gap}, we see that the lower $\cond_\softer$ results in faster convergence for a fixed optimality gap. Without further information on the gradient and Hessian bounds, we need to lower the "smoother" $\softer$ for a lower $\cond_\softer$. However, the "smoothing" regret $\softer^{-1}\log\fcount$ from Corollary \ref{cor:gap} works in the opposite direction. Consequently, we will equate both the optimality gap from the smooth approximation $\fsub_\softer(\cdot)$ and the "smoothing" regret to $\optgap/2$.
	This results in $\softer = 2\optgap^{-1}\log \fcount$, with $\fcount$ being the number of function $\func_i(\cdot)$ contributing to the same $\max$ operation. $\argset_\softer$ is generated consequently. Immediately, we have the "smoothing" regret in Corollary \ref{cor:gap} as $\optgap/2$. Then, we equate the gap from $\fsub_\softer(\cdot)$ using the upper-bound in Lemma \ref{lemma:opt_gap}. Afterwards, we replace the condition number $\cond_\softer$ in accordance with \eqref{eq:hess_bound} after calculating the strong-convexity and smoothness parameters $\heslow_\softer$ and $\hesupp_\softer$ via Lemma \ref{lemma:hess_bound}. Finally, we upper bound the initial smooth approximation gap $\fsub_\softer(\argu_1) - \fsub_\softer(\argu^*)$ with $\gradupp_\softer \dist_\softer$ using the convexity relation and arrive at the result of the theorem.
\end{proof}

\subsubsection{Computational Cost of the Algorithm}
\begin{corollary} \label{cor:time_comp}
	For an optimality gap $\optgap$, the computation time $T$ needed is such that $T\in O(\fcount \sqrt{\log\fcount/\optgap} \log^2(1/\optgap))$ for an arbitrarily small $\optgap > 0$. More specifically:
	\begin{equation*}
		T \in O\left(
			\fcount \sqrt{\frac{\log\fcount}{\optgap}} \log\frac{1}{\optgap} \left(\cost\dimcnt + \log\frac{1}{\optgap} + \log\log\fcount\right)
		\right),
	\end{equation*}
	where $\cost$ is the average cost of calculating a partial derivative for any $\func_i(\cdot)$, $\fcount$ is the number of functions contributing to $\func(\cdot)$ and $\dimcnt$ is the dimension of the domain of $\func(\cdot)$'s .
\end{corollary}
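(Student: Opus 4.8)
The plan is to write the total running time $T$ as the product of the iteration count already controlled by Theorem~\ref{thm:itecnt} and the arithmetic cost of a single pass through the while-loop of Algorithm~\ref{alg:smooth}, and then to simplify the resulting expression asymptotically. Since Theorem~\ref{thm:itecnt} supplies $t \in O(\sqrt{\log\fcount/\optgap}\,\log(1/\optgap))$ iterations to reach the gap $\optgap$, the only remaining task is to establish that one iteration costs $O(\fcount(\cost\dimcnt + \log(1/\optgap) + \log\log\fcount))$ and then to multiply the two factors.

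To obtain the per-iteration cost, I would itemize the operations inside the loop. Evaluating the probability vector $\prob_\softer(\altarg_t)$ from Definition~\ref{def:prob} requires, for each of the $\fcount$ indices, one function value $\func_i(\altarg_t)$, one scalar product $\softer\func_i(\altarg_t)$, and one exponentiation, followed by a normalization. Forming the gradient surrogate $\E{\prob_\softer(\altarg_t)}{\grad\func_i(\altarg_t)}$ in Line~\ref{line:x}, which equals $\grad\fsub_\softer(\altarg_t)$ by Lemma~\ref{lemma:sub_grad}, requires the $\fcount$ individual gradients $\grad\func_i(\altarg_t)$; each such gradient consists of $\dimcnt$ partial derivatives computed at average cost $\cost$, so this step contributes $\fcount\dimcnt\cost$. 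The convex-combination weighting and the two vector updates in Lines~\ref{line:x} and~\ref{line:y} are only $O(\fcount\dimcnt)$ and $O(\dimcnt)$ respectively, hence subdominant.

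The subtle point — and the step I expect to require the most care — is accounting for the arithmetic precision induced by the smoother. With $\softer = 2\optgap^{-1}\log\fcount$ fixed in the initialization, its bit-length is $\Theta(\log(1/\optgap) + \log\log\fcount)$, so each of the $\fcount$ scalar products $\softer\func_i(\altarg_t)$ and the corresponding exponentiations no longer cost $O(1)$ but instead scale with this bit-length. Summing the gradient contribution $\fcount\dimcnt\cost$ with the smoother-dependent scalar contribution $\fcount\bigl(\log(1/\optgap) + \log\log\fcount\bigr)$ then gives the claimed per-iteration cost $O\!\bigl(\fcount(\cost\dimcnt + \log(1/\optgap) + \log\log\fcount)\bigr)$.

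Multiplying this per-iteration cost by the iteration count from Theorem~\ref{thm:itecnt} yields the detailed bound in the statement. The coarse bound $O(\fcount\sqrt{\log\fcount/\optgap}\,\log^2(1/\optgap))$ then follows by treating $\cost$, $\dimcnt$, and $\fcount$ as fixed and observing that, as $\optgap\to 0^+$, the factor $\log(1/\optgap)$ dominates both the constant $\cost\dimcnt$ and the term $\log\log\fcount$ inside the parenthesis; the two $\log(1/\optgap)$ factors then combine into $\log^2(1/\optgap)$, completing the argument.
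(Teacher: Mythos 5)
Your proposal is correct and follows essentially the same route as the paper: multiply the iteration count from Theorem~\ref{thm:itecnt} by a per-iteration cost of $O(\fcount(\cost\dimcnt + \log(1/\optgap) + \log\log\fcount))$, where the $O(\fcount\dimcnt)$ partial-derivative computations give the $\cost\dimcnt$ term and the $\fcount$ exponentiations involving the smoother $\softer = 2\optgap^{-1}\log\fcount$ give the $\log(1/\optgap) + \log\log\fcount$ term. Your framing of the exponentiation cost via the bit-length of $\softer$ is just a restatement of the paper's observation that exponentiating to a power of order $\optgap^{-1}\log\fcount$ incurs an extra $\log(\optgap^{-1}) + \log\log\fcount$ factor.
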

\begin{proof}
	We need $t \in O((\log\fcount/\optgap)^{1/2}\log(\optgap^{-1}))$ iterations as shown in Theorem \ref{thm:itecnt}. We observe that each iteration of the while-loop in Algorithm \ref{alg:smooth} requires $O(\fcount \dimcnt)$ partial derivative calculations. Due to the computation of probability vector $\prob_\softer(\argu_t)$ with respect to Definition \ref{def:prob}, each iteration also requires a total of $\fcount$ exponentiation to the power of $O(\optgap^{-1}\log\fcount)$ when $s = 2\optgap^{-1} \log n$. Each of such exponentiations has additional computational cost of $\left(\log(\optgap^{-1})+\log\log\fcount\right)$. Combination of these costs gives the corollary.
\end{proof}
\subsubsection{Online Version of the Algorithm (without Specifying $\optgap$)}
\begin{corollary}
	We can achieve the time-complexity in Corollary \ref{cor:time_comp}, which is of the form $T\in \tilde{O}(\cost\fcount\dimcnt \sqrt{\optgap^{-1}})$, in an online fashion with no requested optimality gap guarantee $\optgap$. $\tilde{O}$ is the soft-O notation ignoring logarithmic factors compared to big-O.
\end{corollary}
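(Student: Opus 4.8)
The plan is to remove the need to fix $\optgap$ in advance by wrapping Algorithm~\ref{alg:smooth} in a geometric (doubling) schedule on the target accuracy. The key structural fact is that the single-shot cost of Corollary~\ref{cor:time_comp} depends on the accuracy only through the factor $\sqrt{1/\optgap}$ together with polylogarithmic terms in $1/\optgap$; because $\sqrt{1/\optgap}$ grows multiplicatively while the logarithmic terms grow only additively as the target shrinks, the total cost of a sequence of geometrically shrinking targets is dominated, up to a constant, by its last phase.

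Concretely, I would run phases $k = 0, 1, 2, \dots$ in which phase $k$ targets the gap $\optgap_k = 2^{-k}\optgap_0$ (for a fixed $\optgap_0$, e.g.\ $\optgap_0=1$) by invoking Algorithm~\ref{alg:smooth} with the phase-specific smoother $\softer_k = 2\optgap_k^{-1}\log\fcount$, the associated set $\argset_{\softer_k}$, and the corresponding parameters $\heslow_{\softer_k}$, $\hesupp_{\softer_k}$, $\cond_{\softer_k}$, $\gradupp_{\softer_k}$. Each phase is warm-started at the last iterate of the previous one. By Theorem~\ref{thm:itecnt} and Corollary~\ref{cor:time_comp}, phase $k$ reaches $\func(\argu)-\func(\argu^*)\le\optgap_k$ in time $T_k$ equal to the single-shot bound at accuracy $\optgap_k$. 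The cumulative time to first reach an arbitrary accuracy level $\optgap$ is then $\sum_{k=0}^{k^\star} T_k$, where $k^\star$ is the first index with $\optgap_{k^\star}\le\optgap$.

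The main step is to show this sum collapses to its final term. Writing $T_k$ as a constant times $(\sqrt2)^k$ times a factor that is at most polynomial in $k$, the ratio of consecutive terms tends to $\sqrt2>1$, so $\sum_{k=0}^{k^\star} T_k \in O(T_{k^\star})$ by a standard geometric-domination estimate. Since $\optgap_{k^\star}\ge\optgap/2$, this yields $\sum_{k=0}^{k^\star} T_k \in \tilde{O}(\cost\fcount\dimcnt\sqrt{1/\optgap})$, exactly the bound of Corollary~\ref{cor:time_comp}. Because this holds for every accuracy crossed during the run, no a-priori $\optgap$ is required and the user may terminate at any time.

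I expect the main obstacle to be bookkeeping the phase-dependent quantities: the smoothed objective $\fsub_{\softer_k}$, its curvature bounds, condition number, and containing set $\argset_{\softer_k}$ all change between phases, so one must verify that each phase truly costs the single-shot bound at $\optgap_k$ and that the geometric domination survives the phase-dependent logarithmic factors --- in particular the $\log(1/\optgap_k)$ terms and the warm-start distance $\dist_{\softer_k}$, which enters only inside a logarithm and hence remains within $\tilde{O}$ even under a crude restart. The recomputation of $\softer_k$, $\argset_{\softer_k}$ and the parameters costs $O(\fcount)$ per phase across $O(\log(1/\optgap))$ phases, and is therefore absorbed into the claimed complexity.
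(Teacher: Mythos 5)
Your proposal is correct and matches the paper's own argument: the paper likewise runs Algorithm~\ref{alg:smooth} repeatedly with guarantees $\optgap_k = \optgap_{k-1}/2$ and bounds the total time by a geometric sum $\sum_{k=0}^{m}\sqrt{\log\fcount/(2^{-k}\optgap_0)}$ dominated by its final term, with the logarithmic factors absorbed via $\optgap_m \geq \optgap/2$. The only cosmetic differences are your warm-starting of each phase (the paper simply restarts, which suffices since the initial distance enters only logarithmically) and your handling of the polylog factors inside the geometric domination rather than factoring them out of the sum.
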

\begin{proof}
	We initialize with some $\optgap_0$ and run Algorithm \ref{alg:smooth} with $\optgap_0$ as the optimality guarantee. Then, after sufficient iterations to achieve the requested $\optgap_0$, we restart Algorithm \ref{alg:smooth} with a new guarantee $\optgap_{k} = \optgap_{k-1}/2$, for $k\geq1$ and repeat non-stop.
	
	For $\optgap$ such that $2^{1-m} \geq \optgap/\optgap_0 \geq 2^{-m}$ for some integer $m\geq1$, the total exhausted time can be upper-bounded as follows using the fact that $\log$ is monotonically increasing and $2^{-m}\optgap_0$ is lower-bounded with $\optgap/2$, 
	\begin{equation*}
		T \in O\left(
		\fcount 
		\left(\sum_{k=0}^{m} \sqrt{\frac{\log\fcount}{2^{-k} \optgap_0}}\right)
		\log\frac{2}{\optgap} \left(\cost\dimcnt + \log\frac{2}{\optgap}\log\log\fcount\right)
		\right)
	\end{equation*}
	This bound translates to the same bound in Corollary \ref{cor:time_comp}.
\end{proof}

In the next section, we shall investigate an interesting specific application for the general accelerated min-max optimization via smooth approximation, which we have introduced.
\section{(1+$\arbsmall$)-Approximation 
	for the Problem of Minimal Bounding Sphere}
\label{sec:mbs}
Let us suppose we have $\fcount$ points, each located at $\point_i$ for $1\leq i\leq \fcount$, in the $\dimcnt$ dimensional space $\RR^\dimcnt$. 
Our minimization target is such that:
\begin{equation} \label{eq:func_mbs}
	\func(\argu) = \max_{1\leq i\leq \fcount} 
	\|\argu - \point_i\|^2.
\end{equation}
This is the so-called minimal bounding sphere problem such that it finds an optimal point $\argu^*$, which, together with $\func(\argu^*)$ from \eqref{eq:func_mbs}, defines the center and radius of a ball enclosing all of the $\fcount$ points in $\RR^\dimcnt$ with the smallest possible radius. 

The optimal point $\argu^*$ is defined as:
\begin{equation*}
	\argu^* \define \argmin{\argu \in \RR^\dimcnt}{\func(\argu)}.
\end{equation*}

Since $\argu^*$ minimizes the maximum 
euclidean distance to a point $\point_i$, we know that $\argu^*$ belongs to the convex-hull of points $\{\point_i\}_{i=1}^\fcount$ since we can always decrease these 
distances by moving towards the convex-hull.

We shall utilize Algorithm \ref{alg:smooth} with the initial point $\argu_1$ belonging to this convex-hull, e.g. $\argu_1 = \fcount^{-1} \sum_{i=1}^\fcount \point_i$, the arithmetic mean of the points.

Before running Algorithm \ref{alg:smooth}, we determine the strong-convexity and Lipschitz-smoothness parameters, which are $\heslow_{\softer,i} = \hesupp_{\softer,i} = 2$
for all $1\leq i\leq \fcount$ in this particular problem. Consequently, the overall strong-convexity and pseudo-smoothness parameters are also 
$\heslow_\softer = \tilde{\hesupp}_\softer = 2$
, respectively. $\heslow_\softer$ reveals itself after combining Lemma \ref{lemma:hess_bound} with \eqref{eq:hess_bound}, and $\tilde{\hesupp}_\softer$ is defined in Theorem \ref{thm:itecnt} as the maximum smoothness parameter from the individual functions. What only remains to be set in Algorithm \ref{alg:smooth} is the gradient norm upper-bound $\gradupp_\softer$ which inherently includes determining the set $\argset_\softer$ guaranteed to include the optimal point $\argu^*$, and all iterations $\{\argu\}_{t=1}^\infty$, $\{\altarg\}_{t=1}^\infty$.

\subsection{Gradient Norm Bound 
	for Minimal Bounding Sphere}
Assume the minimal bounding sphere is such that the maximum distance (i.e. radius) between the optimal point $\argu^*$ and one of the other points $\point_i$ is $\radius$.

\begin{lemma} \label{lemma:radius_bounds}
	After setting the initial point $\argu_1$ and computing $\func(\argu_1)$ using \eqref{eq:func_mbs}, we have the following bounds on the minimal bounding sphere radius $\radius$:
	\begin{equation*}
	\sqrt{\func(\argu_1)}/2 \leq \radius \leq \sqrt{\func(\argu_1)}.
	\end{equation*}
\end{lemma}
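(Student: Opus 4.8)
The plan is to relate the radius $\radius = \sqrt{\func(\argu^*)}$ of the minimal bounding sphere, where $\argu^*$ is the optimal center from \eqref{eq:star} and $\radius = \max_i \|\argu^* - \point_i\|$, to the quantity $\sqrt{\func(\argu_1)}$, which is precisely the maximum distance from the initial point $\argu_1$ to any of the $\fcount$ points $\point_i$. Both inequalities will reduce to the definition of $\radius$ together with a single triangle-inequality argument, so no substantial computation is needed.

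For the upper bound, I would invoke the optimality of $\argu^*$. Since $\argu^*$ minimizes $\func(\cdot)$ over all of $\RR^\dimcnt$, we have $\func(\argu^*) \leq \func(\argu_1)$, and taking square roots of these nonnegative quantities gives $\radius = \sqrt{\func(\argu^*)} \leq \sqrt{\func(\argu_1)}$. Equivalently, the ball centered at $\argu_1$ of radius $\sqrt{\func(\argu_1)}$ already encloses every point, so the minimal enclosing radius cannot exceed it.

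For the lower bound, the key observation I would use is that $\argu_1$ is chosen to lie in the convex hull of $\{\point_i\}_{i=1}^{\fcount}$ (for instance as their arithmetic mean). Because the minimal bounding ball is convex and contains every $\point_i$, it contains their entire convex hull and hence contains $\argu_1$; therefore $\|\argu_1 - \argu^*\| \leq \radius$. I would then apply the triangle inequality: for each $i$, $\|\argu_1 - \point_i\| \leq \|\argu_1 - \argu^*\| + \|\argu^* - \point_i\| \leq \radius + \radius = 2\radius$, using that every point lies within distance $\radius$ of $\argu^*$. Maximizing the left-hand side over $i$ yields $\sqrt{\func(\argu_1)} = \max_i \|\argu_1 - \point_i\| \leq 2\radius$, which rearranges to $\sqrt{\func(\argu_1)}/2 \leq \radius$.

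The only step requiring any care — and the main, albeit modest, obstacle — is justifying $\|\argu_1 - \argu^*\| \leq \radius$, i.e. that $\argu_1$ lies inside the minimal bounding ball. This rests on two facts already available: that a ball is convex and so contains the convex hull of any set of points it contains, and that $\argu_1$ was constructed to lie in that convex hull. Once this containment is established, both bounds follow immediately from the triangle inequality and the identification of $\radius$ with $\sqrt{\func(\argu^*)}$.
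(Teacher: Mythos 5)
Your proof is correct and follows essentially the same route as the paper: the upper bound from the suboptimality of $\argu_1$, and the lower bound from the fact that $\argu_1$ lies in the convex hull of $\{\point_i\}_{i=1}^{\fcount}$, hence inside the minimal bounding sphere, so its distance to any $\point_i$ is at most the diameter $2\radius$. Your version merely makes explicit (via the triangle inequality through $\argu^*$) the diameter bound that the paper states in words.
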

\begin{proof}
	The upper-bound is trivial since $\argu_1$ is not necessarily optimal. The lower-bound comes from the fact that $\argu_1$ belongs to the convex-hull of $\{\point_i\}_{i=1}^\fcount$ and, consequently, $\func(\argu_1)$ cannot exceed 
	the diameter of minimal bounding sphere which encloses all points $\point_i$ and, hence, their convex-hull.
\end{proof}

\begin{lemma} \label{lemma:grad_upp}
	The gradient norm upper-bound is such that:
	\begin{equation*}
		\gradupp_\softer = 6\sqrt{5\func(\argu_1)+\optgap/2},
	\end{equation*}
	where $\argu_1$ is the initial point of Algorithm \ref{alg:smooth} and $\optgap$ is the requested optimality gap.
\end{lemma}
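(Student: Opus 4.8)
The plan is to read $\gradupp_\softer$ directly off the geometry of the iterates. Since each component is $\func_i(\argu)=\|\argu-\point_i\|^2$, its gradient is $\grad\func_i(\argu)=2(\argu-\point_i)$, so $\|\grad\func_i(\argu)\|=2\|\argu-\point_i\|\le 2(\|\argu-\argu^*\|+\radius)$, where I used that every $\point_i$ lies within the minimal bounding sphere radius $\radius$ of its center $\argu^*$. Hence it suffices to bound $\|\argu-\argu^*\|$ uniformly over the set $\argset_\softer$ of all iterates and then trade $\radius$ for $\func(\argu_1)$ via Lemma \ref{lemma:radius_bounds}.

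First I would bound the primary iterates $\argu_t$. By strong convexity of $\fsub_\softer$ with parameter $\heslow_\softer=2$ we have $\|\argu_t-\argu^*\|^2\le \fsub_\softer(\argu_t)-\fsub_\softer(\argu^*)$, and Lemma \ref{lemma:opt_gap} upper bounds the right-hand side by its bracketed prefactor times a decaying exponential. The crucial observation—and what dissolves the apparent circularity (the gradient bound feeds $\cond_\softer$, which feeds the iterates, which define $\argset_\softer$, which defines the gradient bound)—is that this exponential never exceeds $1$, so the resulting distance bound is completely independent of $\cond_\softer$ and hence of $\gradupp_\softer$. Substituting $\heslow_\softer=2$, the smoothing estimate of Lemma \ref{lemma:fsub} (which for $\softer=2\optgap^{-1}\log\fcount$ gives $\fsub_\softer(\argu_1)\le\func(\argu_1)+\optgap/2$ and $\fsub_\softer(\argu^*)\ge\func(\argu^*)=\radius^2$), and the geometric fact $\|\argu_1-\argu^*\|\le\radius$, the prefactor collapses: the two $\radius^2$ contributions cancel and I obtain a clean $\|\argu_t-\argu^*\|^2\le\func(\argu_1)+\optgap/2$ valid for every $t$.

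Next I would propagate this to the extrapolated iterates $\altarg_t$. Writing the momentum step as $\altarg_{t+1}=\argu_{t+1}+c(\argu_{t+1}-\argu_t)$ with $c=(\sqrt{\cond_\softer}-1)/(\sqrt{\cond_\softer}+1)\in(0,1)$, the triangle inequality gives $\|\altarg_{t+1}-\argu^*\|\le 2\|\argu_{t+1}-\argu^*\|+\|\argu_t-\argu^*\|$, i.e. at most a factor $3$ over the primary bound. Consequently the ball of radius $3\sqrt{\func(\argu_1)+\optgap/2}$ about $\argu^*$ contains all of $\{\argu_t\}$ and $\{\altarg_t\}$ and is a legitimate choice of $\argset_\softer$; feeding this radius into the gradient estimate gives $\gradupp_\softer\le 2\big(3\sqrt{\func(\argu_1)+\optgap/2}+\radius\big)$, and Lemma \ref{lemma:radius_bounds} replaces $\radius$ by $\sqrt{\func(\argu_1)}$.

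The final step is the purely algebraic reconciliation of $6\sqrt{\func(\argu_1)+\optgap/2}+2\sqrt{\func(\argu_1)}$ with the stated single radical $6\sqrt{5\func(\argu_1)+\optgap/2}$; squaring and using $\radius^2\le\func(\argu_1)$ shows the stated form dominates (it would only fail if $\optgap$ exceeded roughly $66\,\func(\argu_1)$, i.e. far outside any meaningful regime, since one never requests a gap larger than the initial suboptimality $\func(\argu_1)-\radius^2$). I expect the main obstacle to be conceptual rather than computational: correctly arguing that the iterate-distance bound is self-contained—independent of the very gradient bound we are constructing—so that defining $\argset_\softer$ as a fixed ball is consistent and no fixed-point or induction subtlety is swept under the rug. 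The bookkeeping that folds the momentum factor, the smoothing redundancy $\optgap/2$, and the radius bound into one clean radical is the only genuinely fiddly part.
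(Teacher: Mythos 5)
Your route is genuinely different from the paper's: the paper never bounds distances to $\argu^*$ at all, but instead shows $\func(\argu_t)\le 5\radius^2+\optgap/2$ for every $t$ (via Lemma \ref{lemma:opt_gap} with the exponential replaced by $1$ --- the same de-circularization trick you use) and then bounds gradient norms by noting that the distance from $\argu_t$ to any convex combination of the $\point_i$ is at most the distance to the farthest $\point_i$, i.e. $\sqrt{\func(\argu_t)}$, handling the momentum iterates $\altarg_t$ by substituting the update rule into the gradient expression. Your ball-around-$\argu^*$ construction is a legitimate alternative, and in one respect it fits the framework better than the paper's own proof: you bound every component gradient $\|\grad\func_i(\argu)\|$ over $\argset_\softer$, which is what the definition of $\gradupp_\softer$ in Lemma \ref{lemma:hess_bound} actually requires, whereas the paper only bounds $\|\grad\fsub_\softer\|$ at the iterates. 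However, your argument has a genuine gap.

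The gap is your cornerstone inequality $\|\argu_t-\argu^*\|^2\le\fsub_\softer(\argu_t)-\fsub_\softer(\argu^*)$. This is false in general, because $\argu^*$ minimizes $\func$, not $\fsub_\softer$, so $\grad\fsub_\softer(\argu^*)\ne 0$ and the first-order term in the strong-convexity inequality cannot be dropped. Concretely, take $\dimcnt=1$ with $\point_1=-1$ and $\point_2=\point_3=1$: then $\argu^*=0$ but $\grad\fsub_\softer(0)=-2/3$, so the minimizer $\argu^*_\softer$ of $\fsub_\softer$ lies strictly to the right of $\argu^*$, and for iterates near $\argu^*_\softer$ --- precisely where the algorithm drives them --- one has $\fsub_\softer(\argu_t)-\fsub_\softer(\argu^*)<0<\|\argu_t-\argu^*\|^2$. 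The repair is to apply strong convexity at $\argu^*_\softer$, where the gradient does vanish, and to control the displacement via $\|\argu^*-\argu^*_\softer\|^2\le\fsub_\softer(\argu^*)-\fsub_\softer(\argu^*_\softer)\le\optgap/2$, which follows from Lemma \ref{lemma:fsub} together with $\func(\argu^*_\softer)\ge\func(\argu^*)$; chaining distances then gives $\|\argu_t-\argu^*\|\le\sqrt{\func(\argu_1)+\optgap}+\sqrt{\optgap/2}$ instead of your $\sqrt{\func(\argu_1)+\optgap/2}$. This destroys the ``clean cancellation'': the resulting gradient bound $6\sqrt{\func(\argu_1)+\optgap}+6\sqrt{\optgap/2}+2\sqrt{\func(\argu_1)}$ stays below the stated $6\sqrt{5\func(\argu_1)+\optgap/2}$ only for $\optgap$ up to roughly $0.8\,\func(\argu_1)$, not the $66\,\func(\argu_1)$ margin you computed --- still enough to cover the meaningful regime $\optgap\le\func(\argu_1)-\radius^2\le\tfrac{3}{4}\func(\argu_1)$ (by Lemma \ref{lemma:radius_bounds}), but with little room to spare. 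So your architecture survives after a fix you did not anticipate, but as written the proof asserts an inequality that fails exactly at the points the algorithm converges to.
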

\begin{proof}
	In accordance with this specific problem, we can further upper bound the smooth approximation optimality guarantee in Lemma \ref{lemma:opt_gap} by first upper bounding the multiplicand in parenthesis on the greater side of the inequality since we have an exponential multiplier, i.e. $\exp(-(t-1)/\sqrt{\cond_\softer})$, which is guaranteed to be non-negative. After also upper bounding this exponential multiplier, since the upper bound of multiplicand turns out to be always nonnegative, we obtain the following result:
	\begin{equation} \label{eq:fsub_upp}
		\fsub_\softer(\argu_t) \leq 5 \radius^2 + \frac{\log \fcount}{s},
		\quad \text{ for all } t\geq 1.
	\end{equation}
	This upper bounding takes place by replacing the quantities in Lemma \ref{lemma:opt_gap} with their corresponding bounds using the facts $\heslow_\softer=2$, $\|\argu_1 - \argu^*\|\leq\radius$, $\fsub_\softer(\argu_1) \leq 4\radius^2 +\softer^{-1}\log \fcount$, and $\exp(-(t-1)/\sqrt{\cond_\softer}) \leq 1$ for all $t\geq 1$. The distance inequality $\|\argu_1 - \argu^*\|\leq\radius$ is due to a fact that the minimal bounding sphere has its center at $\argu^*$, and $\argu_1$ is contained inside the said sphere since it is encapsulated by the convex-hull of all the points $\point_i$. Similarly, the inequality $\fsub_\softer(\argu_1) \leq 4\radius^2 + \softer^{-1}\log \fcount$ results from Lemma \ref{lemma:fsub} and $\func(\argu_1) \leq 4\radius^2$, since $\argu_1$ is again contained in the same minimal bounding sphere with diameter $2\radius$.

	Then, by Lemma \ref{lemma:fsub}, \eqref{eq:fsub_upp}, and setting $\softer = 2\optgap^{-1}\log\fcount$ as in Algorithm \ref{alg:smooth} for a given optimality gap guarantee $\optgap$, we get
	\begin{equation} \label{eq:f_bound}
		\func(\argu_t) \leq 5\radius^2 + \optgap/2,
		\quad \text{ for all }t\geq 1.
	\end{equation}
	
	Regarding the gradients for minimal bounding sphere problem, using the expectation form of the gradient in Lemma \ref{lemma:sub_grad} and incorporating the function definition in \eqref{eq:func_mbs}, we have:
	\begin{equation} \label{eq:alt_grad}
	\grad \fsub_\softer(\argu) = 2\left(\argu - \E{\prob_\softer(\argu)}{\point_i}\right)
	\end{equation}
	
	Combining \eqref{eq:f_bound} and \eqref{eq:alt_grad}, we have a bound on the gradient norms of the smoothing function $\fsub_\softer(\cdot)$ at points $\{\argu_{t}\}_{t=1}^\infty$ as
	\begin{equation} \label{eq:grad_x_bnd}
	\|\grad \fsub_\softer(\argu_t)\| \leq 2\sqrt{5\radius^2 + \optgap/2},
	\end{equation}
	since we can claim $\|\argu_{t} - \E{\prob_\softer(\argu_t)}{\point_i}\| \leq \sqrt{\func(\argu_{t})}$, which results from the distance between $\argu_{t}$ and some weighted average of points $\point_i$, specifically $\E{\prob_\softer(\argu_t)}{\point_i}$, being at most the distance between $\argu_{t}$ and the point $\point_i$ farthest to it, i.e. $\sqrt{\func(\argu_{t})}$.
	
	Let us next investigate the gradients at $\{\altarg_t\}_{t=1}^\infty$, which are calculated on Line \ref{line:x} of Algorithm \ref{alg:smooth}.
	
	For $t=1$, its norm is upper-bounded by $2\radius$ since the diameter of minimal bounding sphere is $2\radius$ which includes the initialization $\altarg_1$. For $t>1$, combining \eqref{eq:alt_grad} and Line \ref{line:y} from Algorithm \ref{alg:smooth}, we have
	\begin{equation*}
		\frac{1}{2}\cdot \grad \fsub_\softer(\altarg_t) 
		= \argu_{t} + \left(
		1 - 2\left(\sqrt{\cond_\softer} + 1\right)^{-1}
		\right) \left(\argu_{t} - \argu_{t-1}\right) 
		- \E{\prob_\softer(\altarg_t)}{\point_i}.
	\end{equation*} 
	Using the triangle inequality and by upper bounding the negative terms with 0,
	\begin{equation*}
		\|\grad \fsub_\softer(\altarg_t)\| \leq 4\|\argu_{t} - \E{\prob_\softer(\altarg_t)}{\point_i}\|
		+ 2\|\argu_{t-1} - \E{\prob_\softer(\altarg_t)}{\point_i}\|.
	\end{equation*} 
	Finally, using \eqref{eq:f_bound}, we have
	\begin{equation} \label{eq:grad_y_bnd}
		\|\grad \fsub_\softer(\altarg_t)\| \leq 6\sqrt{5\radius^2 + \optgap/2},
	\end{equation}
	as we can claim $\|\argu_{t} - \E{\prob_\softer(\altarg_t)}{\point_i}\| \leq \sqrt{\func(\argu_{t})}$ like before.
	
	With \eqref{eq:grad_x_bnd} and \eqref{eq:grad_y_bnd}, we have bounded the gradient norms at all iterations $\{\argu_{t}\}_{t=1}^\infty$ and $\{\altarg_{t}\}_{t=1}^\infty$. We take an arbitrary $\argu$, member to the convex-hull of $\{\argu_{t}\}_{t=1}^\infty$, $\{\altarg_{t}\}_{t=1}^\infty$ and the optimal point $\argu^*$. As discussed in Section \ref{sec:acce}, it is sufficient to generate a gradient norm upper bound for this arbitrary point $\argu$ to obtain $\gradupp_\softer$. Since $\argu$ is a convex combination of $\{\argu_{t}\}_{t=1}^\infty$, $\{\altarg_{t}\}_{t=1}^\infty$ and $\argu^*$, we decompose it into individual parts and insert that version of $\argu$ into \eqref{eq:alt_grad}. Using triangle inequality and the claim $\|\argu - \E{\prob_\softer(\altarg)}{\point_i}\| \leq \sqrt{\func(\argu)}$ for any pair of $(\argu,\altarg)$, the common gradient norm bound turns out to be the maximum of bounds \eqref{eq:grad_x_bnd} and \eqref{eq:grad_y_bnd}, since the gradient at optimal point is 0.
	Consequently, we can set $\gradupp_\softer = 6\sqrt{5\func(\argu_1)+\optgap/2}$.
\end{proof}

\subsection{Convergence result}
Before examining the convergence result, we note that, for minimal bounding sphere problem, the $(1+\arbsmall)$-approximation translates into converging to a bounding sphere with radius $(1+\arbsmall)\radius$. Consequently, we have that, for some $t$:
\begin{equation*}
	\func(\argu_t) - \func(\argu^*) \leq
		(1+\arbsmall)^2\radius^2-\radius^2 \leq \left(2\arbsmall + \arbsmall^2\right) R^2,
\end{equation*}
meaning the requested optimality gap $\optgap = \left(2\arbsmall + \arbsmall^2\right) R^2$, i.e $\optgap \in O(\arbsmall \radius^2)$ for $0 < \arbsmall \leq 1$.

\begin{theorem}
	For the minimal bounding sphere problem, we can generate an approximate solution by achieving a bounding sphere with radius $(1+\arbsmall)\radius$ for an arbitrarily small positive $\arbsmall$ using Algorithm \ref{alg:smooth}. After setting $\heslow_{\softer,i} = \hesupp_{\softer,i} = 2$ for all $1\leq i\leq \fcount$, and $\gradupp_\softer = 6\sqrt{5\func(\argu_1)+\optgap/2}$, the overall computational complexity $T$ and the total number of iterations by the algorithm are
	\begin{equation*}
		 \begin{gathered}
			T \in \tilde{O}\left(\fcount\dimcnt \sqrt{\frac{1}{\arbsmall}}\right),
			\\\text{ and }
			t = 1 + 
				\log\left(
					1 + \frac{4}{\arbsmall}
				\right)
				\sqrt{
					1 + 18\left(
						1 + \frac{20}{\arbsmall}
					\right)\log\fcount
				}
		\end{gathered}
	\end{equation*}
	where $\tilde{O}(\cdot)$ is the soft-O notation which ignores the logarithmic additives and multipliers.
\end{theorem}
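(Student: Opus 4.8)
The plan is to specialize the general iteration-count identity from Theorem~\ref{thm:itecnt} to the minimal bounding sphere instance and to show that the problem scale $\radius$ cancels out, leaving a bound that depends only on $\arbsmall$, $\fcount$ and $\dimcnt$. Recall that Theorem~\ref{thm:itecnt} writes the sufficient iteration count as a product of a condition-number factor $\sqrt{\cond_\softer}$ and a logarithmic factor, where $\cond_\softer = 2\gradupp_\softer^2\log\fcount/(\optgap\heslow_\softer) + \tilde{\hesupp}_\softer/\heslow_\softer$. I would substitute the instance-specific constants $\heslow_\softer = \tilde{\hesupp}_\softer = 2$ and $\gradupp_\softer = 6\sqrt{5\func(\argu_1)+\optgap/2}$ from Lemma~\ref{lemma:grad_upp}, together with the scale relations that tie everything to $\radius$: the approximation requirement $\optgap = (2\arbsmall+\arbsmall^2)\radius^2 \geq 2\arbsmall\radius^2$, the initialization bounds $\func(\argu_1)\leq 4\radius^2$ and $\func(\argu^*)=\radius^2$ (both because $\argu_1$ lies in the convex hull and the center-to-point distance is $\radius$), and $\dist_\softer = \|\argu_1-\argu^*\|\leq\radius$.

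For the condition-number factor I would compute, using $\heslow_\softer=2$,
\begin{equation*}
	\cond_\softer = \frac{36\left(5\func(\argu_1)+\optgap/2\right)\log\fcount}{\optgap} + 1 = 36\log\fcount\left(\frac{5\func(\argu_1)}{\optgap}+\frac12\right)+1.
\end{equation*}
The crucial observation here is that $\func(\argu_1)$ and $\optgap$ both scale as $\radius^2$, so their ratio is dimensionless: bounding $\func(\argu_1)\leq 4\radius^2$ and $\optgap\geq 2\arbsmall\radius^2$ gives $5\func(\argu_1)/\optgap \leq 10/\arbsmall$, whence $\cond_\softer \leq 18\left(1+20/\arbsmall\right)\log\fcount + 1$. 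Taking the square root reproduces the $\sqrt{1+18(1+20/\arbsmall)\log\fcount}$ factor exactly.

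For the logarithmic factor, whose argument is $\optgap^{-1}(\heslow_\softer\dist_\softer^2 + 2(\fsub_\softer(\argu_1)-\fsub_\softer(\argu^*)))$, I would deliberately avoid the loose gradient-based estimate $\fsub_\softer(\argu_1)-\fsub_\softer(\argu^*)\leq\gradupp_\softer\dist_\softer$ (which is $O(\radius^2)$ but carries too large a constant to match the target) and instead bound the initial gap directly through Lemma~\ref{lemma:fsub}: since $\fsub_\softer(\argu_1)\leq\func(\argu_1)+\softer^{-1}\log\fcount$ and $\fsub_\softer(\argu^*)\geq\func(\argu^*)=\radius^2$, with $\softer^{-1}\log\fcount=\optgap/2$, one gets $\fsub_\softer(\argu_1)-\fsub_\softer(\argu^*)\leq 3\radius^2+\optgap/2$. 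Combining with $\heslow_\softer\dist_\softer^2\leq 2\radius^2$ makes the numerator at most $8\radius^2+\optgap$, and dividing by $\optgap\geq 2\arbsmall\radius^2$ yields $1+4/\arbsmall$. Multiplying the two factors gives precisely the stated expression for $t$.

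Finally, for the time complexity $T$ I would multiply $t$ by the per-iteration cost. Each iteration evaluates the $\fcount$ squared distances $\|\altarg_t-\point_i\|^2$ and forms the $\prob_\softer$-weighted mean $\E{\prob_\softer(\altarg_t)}{\point_i}$ appearing in \eqref{eq:alt_grad}, which costs $\tilde{O}(\fcount\dimcnt)$ once the logarithmic overhead of the $\fcount$ exponentials (as in Corollary~\ref{cor:time_comp}) is folded into $\tilde{O}$. Since the square-root and logarithmic factors of $t$ are $\tilde{O}(\sqrt{1/\arbsmall})$ and $\tilde{O}(1)$ respectively, we have $t\in\tilde{O}(\sqrt{1/\arbsmall})$ and hence $T\in\tilde{O}(\fcount\dimcnt\sqrt{1/\arbsmall})$. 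The main obstacle, and the conceptual heart of the argument, is precisely this $\radius$-cancellation: one cannot simply quote Corollary~\ref{cor:time_comp}, since there $\gradupp_\softer$ was treated as scale-independent, whereas here $\gradupp_\softer^2\propto\radius^2$ cancels the $\radius^2$ in $\optgap$. Recognizing that $\radius$ is unknown yet drops out entirely — with the computable surrogate $\func(\argu_1)$ standing in for $\radius^2$ inside $\gradupp_\softer$ — is what makes the bound both scale-free and effectively realizable.
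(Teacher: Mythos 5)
Your proposal is correct and follows essentially the same route as the paper: substituting the instance-specific constants into Theorem~\ref{thm:itecnt}, using $\func(\argu_1)\leq 4\radius^2$, $\optgap\geq 2\arbsmall\radius^2$, $\dist_\softer\leq\radius$, and crucially replacing the $\gradupp_\softer\dist_\softer$ bound on the initial smooth gap with the tighter $3\radius^2+\optgap/2$ bound from Corollary~\ref{cor:gap} — exactly the refinement the paper makes. Your write-up is in fact more explicit than the paper's prose sketch, carrying out the $\radius$-cancellation algebra that the paper only describes, and it reproduces the stated factors $\sqrt{1+18(1+20/\arbsmall)\log\fcount}$ and $\log(1+4/\arbsmall)$ exactly.
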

\begin{proof}
	We plug in the  $\heslow_{\softer,i} = \hesupp_{\softer,i} = 2$ for all $1\leq i\leq \fcount$ and $\gradupp_\softer = 6\sqrt{5\func(\argu_1)+\optgap/2}$ into the result of Theorem \ref{thm:itecnt} regarding the number of iterations required. We can upper bound the right side of the equality for $t$ before using here, since it can only provide further guarantees as shown in Lemma \ref{lemma:opt_gap}. We also plug in the initial distance $\dist_\softer \leq \radius^2$ by definition of $\radius$, the radius of the minimal bounding sphere, and the selection of $\argu_1$ from the convex-hull of $\{\point_i\}_{i=1}^\fcount$. We note that $\func(\argu_1) \leq 4\radius^2$ from Lemma \ref{lemma:radius_bounds} and bound $\func(\argu_1)$ with $4\radius^2$. Instead of upper bounding $\fsub_\softer(\argu_1) - \fsub_\softer(\argu^*)$ by $\gradupp_\softer \dist_\softer$, as previously done in Theorem \ref{thm:itecnt}, we can use the upper bound of $\left(3\radius^2 + \optgap/2\right)$ resulting from Corollary \ref{cor:gap} and setting $\softer = 2 \optgap^{-1} \log\fcount$ since we have $\func(\argu_1) - \func(\argu^*) \leq 3\radius^2$ due to $\func(\argu^*)=\radius^2$. Lastly, we upper bound the reciprocal of optimality gap, i.e. $1/\optgap$ with $1/(2\arbsmall\radius^2)$ since $\arbsmall > 0$.
\end{proof}

\newpage
\bibliographystyle{IEEEtran}
\bibliography{IEEEabrv,single}

\begin{thebibliography}{10}
\providecommand{\url}[1]{#1}
\csname url@samestyle\endcsname
\providecommand{\newblock}{\relax}
\providecommand{\bibinfo}[2]{#2}
\providecommand{\BIBentrySTDinterwordspacing}{\spaceskip=0pt\relax}
\providecommand{\BIBentryALTinterwordstretchfactor}{4}
\providecommand{\BIBentryALTinterwordspacing}{\spaceskip=\fontdimen2\font plus
\BIBentryALTinterwordstretchfactor\fontdimen3\font minus
  \fontdimen4\font\relax}
\providecommand{\BIBforeignlanguage}[2]{{%
\expandafter\ifx\csname l@#1\endcsname\relax
\typeout{** WARNING: IEEEtran.bst: No hyphenation pattern has been}%
\typeout{** loaded for the language `#1'. Using the pattern for}%
\typeout{** the default language instead.}%
\else
\language=\csname l@#1\endcsname
\fi
#2}}
\providecommand{\BIBdecl}{\relax}
\BIBdecl

\bibitem{Chen}
\BIBentryALTinterwordspacing
R.~Chen", ``"solution of minimax problems using equivalent differentiable
  functions",'' \emph{"Computers and Mathematics with Applications"}, vol.
  "11", no. "12", pp. "1165 -- 1169", "1985". [Online]. Available:
  \url{"http://www.sciencedirect.com/science/article/pii/089812218590104X"}
\BIBentrySTDinterwordspacing

\bibitem{Zang}
\BIBentryALTinterwordspacing
I.~Zang, ``A smoothing-out technique for min - max optimization,'' \emph{Math.
  Program.}, vol.~19, no.~1, pp. 61--77, 1980. [Online]. Available:
  \url{https://doi.org/10.1007/BF01581628}
\BIBentrySTDinterwordspacing

\bibitem{Fang}
\BIBentryALTinterwordspacing
S.-C. Fang and S.-Y. Wu", ``"solving min-max problems and linear semi-infinite
  programs",'' \emph{"Computers and Mathematics with Applications"}, vol. "32",
  no. "6", pp. "87 -- 93", "1996". [Online]. Available:
  \url{"http://www.sciencedirect.com/science/article/pii/0898122196001459"}
\BIBentrySTDinterwordspacing

\bibitem{Zhao}
\BIBentryALTinterwordspacing
G.~Zhao, Z.~Wang, and H.~Mou, ``Uniform approximation of min/max functions by
  smooth splines,'' \emph{J. Computational Applied Mathematics}, vol. 236,
  no.~5, pp. 699--703, 2011. [Online]. Available:
  \url{https://doi.org/10.1016/j.cam.2011.06.023}
\BIBentrySTDinterwordspacing

\bibitem{Nesterov}
\BIBentryALTinterwordspacing
Y.~Nesterov, ``Smooth minimization of non-smooth functions,'' \emph{Math.
  Program.}, vol. 103, no.~1, pp. 127--152, 2005. [Online]. Available:
  \url{https://doi.org/10.1007/s10107-004-0552-5}
\BIBentrySTDinterwordspacing

\bibitem{Lacoste}
\BIBentryALTinterwordspacing
S.~Lacoste{-}Julien, M.~W. Schmidt, and F.~R. Bach, ``A simpler approach to
  obtaining an o(1/t) convergence rate for the projected stochastic subgradient
  method,'' \emph{CoRR}, vol. abs/1212.2002, 2012. [Online]. Available:
  \url{http://arxiv.org/abs/1212.2002}
\BIBentrySTDinterwordspacing

\bibitem{Larsson}
T.~B. Larsson, ``Fast and tight fitting bounding spheres,'' 2008.

\bibitem{Welzl}
E.~"Welzl, ``"smallest enclosing disks (balls and ellipsoids)",'' in \emph{"New
  Results and New Trends in Computer Science"}, H.~"Maurer, Ed.\hskip 1em plus
  0.5em minus 0.4em\relax "Berlin, Heidelberg": "Springer Berlin Heidelberg",
  "1991", pp. "359--370".

\bibitem{Fischer}
K.~"Fischer, B.~G{\"a}rtner, and M.~Kutz, ``"fast smallest-enclosing-ball
  computation in high dimensions",'' in \emph{"Algorithms - ESA 2003",
  year="2003"}, G.~"Di~Battista and U.~Zwick, Eds.\hskip 1em plus 0.5em minus
  0.4em\relax "Berlin, Heidelberg": "Springer Berlin Heidelberg", pp.
  "630--641".

\bibitem{Badoiu}
\BIBentryALTinterwordspacing
M.~B\={a}doiu, S.~Har-Peled, and P.~Indyk, ``Approximate clustering via
  core-sets,'' in \emph{Proceedings of the Thiry-fourth Annual ACM Symposium on
  Theory of Computing}, ser. STOC '02.\hskip 1em plus 0.5em minus 0.4em\relax
  New York, NY, USA: ACM, 2002, pp. 250--257. [Online]. Available:
  \url{http://doi.acm.org/10.1145/509907.509947}
\BIBentrySTDinterwordspacing

\bibitem{Kumar}
P.~Kumar, J.~S.~B. Mitchell, and E.~A. Yıldırım, ``Computing core-sets and
  approximate smallest enclosing hyperspheres in high dimensions*,'' 2002.

\bibitem{Bubeck}
\BIBentryALTinterwordspacing
S.~Bubeck, ``Convex optimization: Algorithms and complexity,''
  \emph{Foundations and Trends in Machine Learning}, vol.~8, no. 3-4, pp.
  231--357, 2015. [Online]. Available: \url{https://doi.org/10.1561/2200000050}
\BIBentrySTDinterwordspacing

\end{thebibliography}

\end{document}